\NewDocumentCommand \virg { +m } {``#1''}
\NewDocumentCommand \R {} {{\mathbb{R}}}
\NewDocumentCommand \N {} {{\mathbb{N}}}
\NewDocumentCommand \C {} {{\mathbb{C}}}
\let\mathccal\mathcal
\let\symcal\mathcal
\let\symbf\mathbf
\NewDocumentCommand \xpar { m } {\left( #1\right)}
\NewDocumentCommand \xcpar { m } {\left[ #1\right]}
\NewDocumentCommand \xxpar { m } {\left\llbracket #1\right\rrbracket}
\NewDocumentCommand \set { m } {\left\{{#1}\right\}}
\NewDocumentCommand \fun { m m m } {{#1}:{#2}\rightarrow {#3}}
\NewDocumentCommand \anfun { m m } {{#1}\rightarrow {#2}} 
\NewDocumentCommand \iset { m m } {\left\{{#1}\mathrel{}\middle\vert\mathrel{}{#2}\right\}}
\NewDocumentCommand \sdiff { m m } {{#1}\Delta {#2}}
\ProvideDocumentCommand \abs { m } {\left\lvert#1\right\rvert}
\NewDocumentCommand{\mval}{ m m }{\mathcal{A}_{#1}\xpar{#2}}
\NewDocumentCommand{\mdist}{ m m }{\mathccal{G}\xcpar{#1, #2}}
\NewDocumentCommand{\QP}{ m }{\mathbf{#1}}
\NewDocumentCommand \coint { m m } {\left[{#1},{#2}\right)}
\NewDocumentCommand \ccint { m m } {\left[{#1},{#2}\right]}
\NewDocumentCommand \rstr { m m } {\left.{#1}\right|_{#2}}
\DeclareMathOperator{\dist}{dist}
\DeclareMathOperator*{\aplimsup}{ap\,lim\,sup}
\numberwithin{equation}{section}
\theoremstyle{plain}
\newtheorem{theorem}{Theorem}[section]
\newtheorem{lemma}[theorem]{Lemma}
\newtheorem{proposition}[theorem]{Proposition}
\newtheorem{corollary}[theorem]{Corollary}
\theoremstyle{definition}
\newtheorem{definition}[theorem]{Definition}
\newtheorem{example}[theorem]{Example}
\theoremstyle{remark}
\newtheorem*{remark}{Remark}
\NewDocumentCommand{\cspace}{m m m m m}{\mathrm{Lip}^{#5}_{#3, #4}\xpar{#1, #2}}
\NewDocumentCommand{\cbund}{m m m}{\mathrm{Lip}^{#3}\xpar{#1, #2}}
\NewDocumentCommand{\densun}{m}{{#1}^1}
\NewDocumentCommand{\udif}{m m}{T_{#2}#1}
\NewDocumentCommand{\poym}{}{\mathtt{J}}
\NewDocumentCommand{\abgr}{ m }{\abs{\nabla #1}^+}
\NewDocumentCommand{\abgrm}{ m }{\abs{\nabla #1}^{\mu, +}}
\title{The Stepanov theorem for Q-valued functions}
\author{Paolo De Donato}
\address{Dipartimento di Matematica ``T. Levi-Civita'', via Trieste 63, 35121 Padova, Italy.}
\email{paolo.dedonato@unipd.it}
\keywords{Lipschitz functions, Rademacher's Theorem, Multiple-valued functions, Stepanov's theorem, Metric measure spaces.}
\subjclass[2020]{54E40, 54E45, 46E36, 26B05, 26B35.}
\thanks{The author is supported by the University of Padova, in particular through the STARS@unipd project ``QuASAR - Questions About Structure And Regularity of currents'' (\texttt{MASS\_STARS\_MUR22\_01}), and by the Gruppo Nazionale per l’Analisi
Matematica, la Probabilità e le loro Applicazioni (GNAMPA) of the Instituto
Nazionale di Alta Matematica (INdAM)}
\begin{document}

\begin{abstract}
In this work we prove the Stepanov differentiation theorem for multiple-valued functions. This theorem is proved in the wide generality of metric-space-multiple-valued functions without relying on a Lipschitz extension result. General definitions of differentiability and approximate differentiability for functions between suitable metric spaces are also introduced.
\end{abstract}

\maketitle

\tableofcontents

\showhyphens{area-minimizing}

\section{Introduction}
The theory of $Q$-valued functions has been developed by Almgren to study regularity properties of mass-minimizing currents. In particular, in \cite{Almgren2000} he generalized the concepts of differential and Sobolev spaces for $Q$-valued functions, i.e. functions with values in the space $\mval QM$ made by unordered $Q$-tuple of points in a metric space $M$, to study regularity properties of \virg{Dir-minimizing} functions, which are $Q$-valued functions that minimize a suitable generalization of the Dirichlet energy for $Q$-valued functions. His work about Dir-minimizing functions has been readapted and simplified in \cite{DeLellis2011}.

When $M=\R^k$, we say that a $Q$-valued function $\fun f{\Omega\subseteq\R^n}{\mval Q{\R^k}}$ is differentiable at $x\in\Omega$ with $f(x)=\xxpar{P_1}+\xxpar{P_2}+\dotsb+\xxpar{P_Q}$ if and only if there exist $M_1, M_2, \dotsc, M_Q$ $(k\times n)$-matrix such that, setting
\begin{gather*}
F(z)=\sum^Q_{i=1}\xxpar{P_i+M_iz}\text ,
\end{gather*}
we have
\begin{gather*}
    \lim_{y\to x}\frac{\mdist{f(y)}{F(y-x)}}{\abs{y-x}}=0
\end{gather*}
(where $\mdist{\cdot}{\cdot}$ is the canonical metric on $\mval QM$ as defined in section \ref{sec:qvl}) and if $P_i=P_j$ for some $i, j$ then $M_i=M_j$.

In \cite{DeLellis2011} the following version of the Rademacher theorem for $Q$-valued functions is proved:
\begin{theorem}
\label{rade}
If $\fun f{\R^n}{\mval Q{\R^k}}$ is a Lipschitz $Q$-valued function then $f$ is a.e.\ differentiable on $\R^n$.
\end{theorem}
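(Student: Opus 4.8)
The plan is to reduce everything to the classical single-valued Rademacher theorem, the only genuinely new point being the matching condition $P_i=P_j\Rightarrow M_i=M_j$ at the points where several sheets of $f$ collide. One could begin by composing with Almgren's bi-Lipschitz embedding $\fun\xi{\mval Q{\R^k}}{\R^N}$ to obtain a classical Lipschitz, hence a.e.\ differentiable, map $\xi\circ f$; but this does not by itself settle the matching, because the image $\xi(\mval Q{\R^k})$ is singular along the diagonal, so I would argue more directly through the barycenter and the spread of $f$.

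First I would record that the barycenter $\fun{\eta\circ f}{\R^n}{\R^k}$, $\eta\circ f(x)=\frac1Q\sum_i P_i(x)$, is single-valued and Lipschitz, hence differentiable a.e.\ by the classical theorem. Where the $Q$ points of $f(x)$ are all distinct, $f$ splits locally into $Q$ single-valued Lipschitz functions (separation being locally uniform by continuity), each differentiable a.e., and the matching condition is vacuous; thus $f$ is differentiable a.e.\ on the open set $\R^n\setminus D$, where $D=\iset{x}{f(x)\text{ has a repeated point}}$ is closed. At a general point one clusters $f(x_0)=\sum_j m_j\xxpar{p_j}$ by the distinct values $p_j$ and, using continuity, writes $f=\sum_j f_j$ near $x_0$ with $f_j$ an $m_j$-valued Lipschitz function satisfying $f_j(x_0)=m_j\xxpar{p_j}$; since $\mdist{\cdot}{\cdot}$ decouples over well-separated clusters, it suffices to treat each $f_j$, that is, to treat the total-collision case $f(x_0)=Q\xxpar{p}$.

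So assume $f(x_0)=Q\xxpar{p}$. Here the matching condition forces all $M_i$ to equal a single matrix $M$, so differentiability at $x_0$ amounts to $\mdist{f(y)}{Q\xxpar{p+M(y-x_0)}}=o(\abs{y-x_0})$. Introduce the spread $s(y):=\mdist{f(y)}{Q\xxpar{\eta\circ f(y)}}$, a nonnegative Lipschitz function vanishing exactly on the closed total-collision set $\set{s=0}$. A short triangle-inequality computation shows that, at a point where $\eta\circ f$ is differentiable, $f$ is differentiable (with $M=D(\eta\circ f)(x_0)$) if and only if $s(y)=o(\abs{y-x_0})$. The key point is that this last estimate holds at \emph{almost every} $x_0\in\set{s=0}$: since $s$ is Lipschitz and vanishes on $\set{s=0}$ we have $s(y)\le\mathrm{Lip}(s)\,\dist(y,\set{s=0})$, and at every Lebesgue density point of $\set{s=0}$ one has $\dist(y,\set{s=0})=o(\abs{y-x_0})$ (otherwise a ball of fixed relative radius around $y$ would avoid $\set{s=0}$, contradicting density). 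Almost every point of $\set{s=0}$ is such a density point, so the estimate, and hence differentiability, holds a.e.\ on the collision set.

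The main obstacle is precisely this matching/spread estimate: it is what makes $Q$-valued differentiability strictly stronger than classical differentiability of the individual sheets — for instance, with $n=k=1$, the map $x\mapsto\xxpar{x}+\xxpar{-x}$ is \emph{not} differentiable at $0$, its spread being $\sqrt2\,\abs{x}$ — and it is resolved by the level-set/density argument above, which is the $Q$-valued analogue of the fact that a Lipschitz function has vanishing gradient a.e.\ on its zero set. The remaining care goes into making the clustering of the second step uniform enough to yield a genuine almost-everywhere statement; this I would handle by running the argument over a countable family (e.g.\ rational separation radii and centers), so that only countably many closed collision sets and local Lipschitz barycenters are involved, each contributing a null exceptional set.
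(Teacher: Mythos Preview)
Your argument is correct and, in spirit, coincides with the paper's own route rather than the one it cites for Theorem~\ref{rade}. The paper attributes Theorem~\ref{rade} to \cite{DeLellis2011}, whose proof goes through a Lipschitz \emph{extension} theorem for $Q$-valued maps; you avoid extension entirely, which is exactly the point of the paper's general framework. Your proof matches almost step for step the paper's proof of Theorem~\ref{steqv} specialized to $X=\R^n$, $M=\R^k$: local splitting at non-diagonal points (your clustering is the iterated version of Corollary~\ref{sep}), reduction to the total-collision set $\Lambda$, and then the Lebesgue-density argument showing that at a.e.\ $x_0\in\Lambda$ the distance to the single-sheet tangent is $o(\abs{y-x_0})$.

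The one genuine, if minor, divergence is your use of the barycenter $\eta\circ f$ and spread $s$ in place of the paper's map $f_a\colon\Lambda\to M$, $f(x)=Q\xxpar{f_a(x)}$. On $\Lambda$ these coincide, so nothing is lost; but the barycenter is specific to the linear target $\R^k$, whereas $f_a$ makes sense for any metric target $M$ and is what lets the paper run the same induction in its full generality. Conversely, your spread function packages the density estimate very cleanly as ``a Lipschitz function has $\abs{\nabla s}=0$ a.e.\ on $\{s=0\}$'', which is pleasant. The countable-cover bookkeeping you flag at the end is handled in the paper exactly as you suggest, via separability.
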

To prove this, the authors used a Lipschitz extension result for $Q$-valued Lipschitz functions from a closed subset of $\R^n$ to all $\R^n$. However, we cannot expect a Lipschitz extension result when we replace $\R^k$ with a generic metric space, because it is not always possible to globally extend a continuous function defined on a closed subset of $\R^n$ with values on a disconnected metric space.

However, an extension result for $Q$-valued Lipschitz functions is not strictly required, since it is possible to prove theorem \ref{rade} without using the extension result for Lipschitz $Q$-valued functions at the cost of introducing a slightly weaker notion of differentiable function (which coincides with the classical one on interior points).

One goal of this work is to prove the Stepanov theorem for $Q$-valued functions:
\begin{theorem}\label{main}
Let $\fun f{B}{\mval Q{\R^k}}$ be a measurable function defined on a measurable set $B\subseteq\R^n$ without isolated points. Let
\begin{gather*}
\abgr f(x)=\limsup_{B\ni y\to x}\frac{\mdist{f(y)}{f(x)}}{\abs{y-x}}\\
\abs{\nabla f}^{\symcal L^n, +}(x)=\aplimsup_{B\ni y\to x}\frac{\mdist{f(y)}{f(x)}}{\abs{y-x}}\\
A_f = \iset{x\in B}{\abgr f(x)<+\infty}\\
A^{\symcal L^n}_f = \iset{x\in B}{\abs{\nabla f}^{\symcal L^n, +}(x)<+\infty}\text .
\end{gather*}

Then $f$ is a.e.\ differentiable on $A_f$ and a.e.\ approximately differentiable on $A^{\symcal L^n}_f$.
\end{theorem}

A proof of the a.e.\ differentiability on the set $A_f$ can be found in corollary 2.8 of \cite{Menne2009} (see also the proof of theorem 3.1.9 in \cite{Federer1996}). In our work, we generalize this result to functions defined on doubling metric measure spaces taking values in complete metric spaces, without requiring the functions in question to be measurable in the classical sense.

In Section \ref{sec:diff} of this work we introduce a broad definition of \virg{differential classes} for functions defined between two metric spaces. A function $\fun{f}{\R^n}{\R^k}$ is differentiable at some point $x\in \R^n$ if and only if there exists an affine function $F(y)=Ay+z_0$, where $z_0\in \R^k$ and $A$ is a $k\times n$ matrix, such that $\abs{f(y)-F(y)}$ is infinitesimal compared with $\abs{y-x}$. When $f$ is defined between Hilbert or Banach space we are still able to define a similar notion of differentiability, which is well-known under the name of Fréchet derivative.

Both these notions of differentiability rely on approximating our function $f$ with some affine function $F$ near $x$. However, we can replace $F$ with any Lipschitz function $\tilde{F}$ defined only in a neighborhood of $x$ that behaves like $F$ near $x$, that is $\lim_{y\to x}\frac{\abs{F(y)-\tilde{F}(y)}}{\abs{y-x}}=0$. All these locally defined Lipschitz functions form a class of equivalence $\xcpar{F}$ which uniquely identify our initial affine function $F$, which we will call in Section \ref{sec:diff} \emph{differential germ} induced by $F$, and the set of all these differential germs is called \emph{distribution}.

Notice that we can define the class of equivalence $\xcpar{F}$ even if $F$ is not an affine function, we only need that $F$ is Lipschitz on a neighborhood of $x_0$. Moreover, we can take any set $\symcal D$ containing some of these classes of equivalence and say $f$ is $\symcal D$-differentiable at $x_0$ if and only if there exists $\xcpar{\tilde{F}}\in\symcal D$ such that $\lim_{x\to x_0}\frac{\abs{f(x)-\tilde{F}(x)}}{\abs{x-x_0}}=0$. Each of these sets $\symcal D$, which are still called distributions, introduces a differentiation structure for functions from $\R^n$ into $\R^k$ which can be richer or poorer than the one induced by the \virg{standard distribution} generated by affine functions.

When $f$ is defined on a doubling metric measure space $\xpar{X, \poym, \mu}$ with values in a complete metric space $(M, d)$ we do not have a notion of affine function, but we can still define generic distributions as explained before. Simply, we no longer have a standard distribution as we do in Euclidean and Banach spaces. On the other hand, this general notion of differentiation through distributions can be adapted to single-valued functions defined between Euclidean spaces, $Q$-valued functions, and real-valued functions defined on $p$-Poincaré metric measure spa\-ces (see \cite{Cheeger1999, Keith2004, Heinonen2007}). We then say $\fun{f}{X}{M}$ is $\symcal D$-differentiable at $x\in X$ if and only if there exists a differential germ $\xcpar{F}\in\symcal D$ such that $\lim_{y\to x}\frac{d\xpar{f(y), F(y)}}{\poym(y, x)}=0$.

Moreover, we say $f$ is approximately differentiable at $x$ if for every $\varepsilon>0$ there exists a $\mu$-measurable set $U\subseteq X$ which has density $1$ at $x$ such that $\frac{d\xpar{f(y), F(y)}}{\poym\xpar{y, x}}<\varepsilon$ for every $y\in U\setminus\set{x}$, or equivalently $\mu-\lim_{y\to x}\frac{d\xpar{f(y), F(y)}}{\poym\xpar{y, x}}=0$ where we have used the symbol $\mu-\lim$ to denote a limit on the topology where every neighborhood of $x$ contains a $\mu$-measurable set with density $1$ at $x$.

Still in Section \ref{sec:diff} we investigate on which conditions a distribution $\symcal D$ must satisfy to get a Stepanov-like result, for both differentiability and approximate differentiability, on such metric spaces. Indeed, if $\symcal D$ has too few differential germs then some Lipschitz functions would be not differentiable on a set with measure strictly greater than zero. The main goal of this section is to prove the equivalence of the Stepanov theorem for a generic distribution $\symcal D$ with the a.e. differentiability (still for $\symcal D$) of any Lipschitz function defined on closed subsets of $X$, as stated in theorems \ref{difcom:aplim} and \ref{difcom}.

Section \ref{sec:qvl} is instead entirely devoted to differentiability properties of metric $Q$-valued functions, that is functions with values in $\mval QM$ instead of $\mval Q{\R^k}$ with $M$ any complete metric space. In this section we use the results provided in section \ref{sec:diff} to prove that if the Stepanov theorem holds for every function from $X$ to $M$ with respect to some distribution $\symcal D$ then it should hold also for functions from $X$ to $\mval{Q}{M}$ with respect to a suitable distribution $\symcal D^Q$ deduced from $\symcal D$. 

We finally use the results proved in these two sections to get the following generalized version of Stepanov's theorem for $Q$-valued functions, which in turn generalizes theorem \ref{main}:

\begin{theorem}\label{maingen}
Let $(X, \poym, \mu)$ be a doubling metric measure space, $(M, d)$ be a complete metric space, and $\symcal D$ a complete (approximate) $X$-dis\-tri\-bu\-tion on $M$. Then every $Q$-valued function $\fun fX{\mval QM}$ is a.e.\ (approximately) $\symcal D^Q$-differentiable on $A_f$ (or $A_f^\mu$), where $A_f$ and $A^\mu_f$ are defined respectively in \eqref{af:j} and \eqref{af:mu}.

In particular, these properties hold whenever $M=\R^k$ and $X$ is $\R^n$ or a $p$-Poincaré doubling metric measure space (thanks to \cite{Cheeger1999}).
\end{theorem}

\section{Generalized differentiability}\label{sec:diff}
\subsection{Differential germs}
Let $(X, \poym, \mu)$ be a $K$-doubling metric measure space ($K>0$), that is, $(X, \poym)$ is a complete separable metric space and $\fun\mu{\symcal P(X)}{\ccint 0{+\infty}}$ is a Borel-regular outer measure on $X$ such that for every $x\in X$ and every $r>0$ we have 
\[
0<\mu\xcpar{B_{2r}(x)}\leq K\mu\xcpar{B_r(x)}<+\infty\text .
\]
\begin{remark}
It is simple to show that for every $x, y\in X$ and $r, s>0$
\begin{gather*}
\mu\xcpar{B_r(x)}\leq \symcal L\xpar{K, \frac{r+\poym(x, y)}{s}}\mu\xcpar{B_s(y)}
\end{gather*}
where $\symcal L(\alpha, \beta)>0$ is a suitable function. Indeed we have
\begin{multline*}
\mu\xcpar{B_r(x)}\leq \mu\xcpar{B_{r+\poym(x, y)}(y)}\leq\mu\xcpar{B_{2^{P\xcpar{\xpar{r+\poym(x, y)}/s}}s}(y)}\\
\leq K^{P\xcpar{\xpar{r+\poym(x, y)}/s}}\mu\xcpar{B_s(y)}
\end{multline*}
where $P(t)=\max\set{0, \lceil\log_2 t\rceil}$.
\end{remark}

\begin{proposition}
Let $E\subseteq X$ be a generic set, there exists a Borel set $F\supseteq E$ such that $\mu\xcpar{F'\cap B_r(x)}\geq \mu\xcpar{F\cap B_r(x)}$ for every $x\in X, r>0$ and every other measurable set $F'\supseteq E\cap B_r(x)$.
\end{proposition}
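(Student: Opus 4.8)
The plan is to construct $B$ as a \emph{measurable hull} (envelope) of $E$, i.e. a Borel set $B\supseteq E$ with the stronger property that $\mu\xcpar{B\cap A}=\mu\xcpar{E\cap A}$ for \emph{every} $\mu$-measurable set $A$ (here $\mu\xcpar{E\cap A}$ denotes the outer measure value). Once this is available the statement follows at once: given $x$, $r$ and a measurable $B'\supseteq E\cap B_r(x)$, the set $B'\cap B_r(x)$ is measurable and contains $E\cap B_r(x)$, so
\begin{gather*}
\mu\xcpar{B'\cap B_r(x)}\geq\mu\xcpar{E\cap B_r(x)}=\mu\xcpar{B\cap B_r(x)}\text ,
\end{gather*}
the last equality being the hull property applied to $A=B_r(x)$.

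The first observation is that $(X,\poym,\mu)$ is $\sigma$-finite. Indeed $X$ is separable, so choosing a countable dense set $\set{x_i}_{i\in\N}$ we have $X=\bigcup_i B_1(x_i)$, and each ball has finite measure by the doubling hypothesis. I would then refine this cover into a countable Borel partition $X=\bigsqcup_n Y_n$ with $\mu\xcpar{Y_n}<+\infty$ for every $n$ (e.g. $Y_n=B_1(x_n)\setminus\bigcup_{i<n}B_1(x_i)$).

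Next, using Borel-regularity of $\mu$, for each $n$ I would pick a Borel set containing $E\cap Y_n$ and realizing its outer measure, then intersect it with $Y_n$ to obtain a Borel set $C_n\subseteq Y_n$ with $C_n\supseteq E\cap Y_n$ and $\mu\xcpar{C_n}=\mu\xcpar{E\cap Y_n}<+\infty$. The key lemma is that each $C_n$ is already a hull of $E\cap Y_n$: for every measurable $A$ one has $\mu\xcpar{C_n\cap A}=\mu\xcpar{(E\cap Y_n)\cap A}$. This is where the finiteness of $\mu\xcpar{Y_n}$ is essential. Splitting $\mu\xcpar{C_n}=\mu\xcpar{C_n\cap A}+\mu\xcpar{C_n\setminus A}$ and $\mu\xcpar{E\cap Y_n}=\mu\xcpar{(E\cap Y_n)\cap A}+\mu\xcpar{(E\cap Y_n)\setminus A}$ by the Carathéodory criterion for the measurable set $A$, and using the trivial inequalities $\mu\xcpar{C_n\cap A}\geq\mu\xcpar{(E\cap Y_n)\cap A}$ and $\mu\xcpar{C_n\setminus A}\geq\mu\xcpar{(E\cap Y_n)\setminus A}$, the equality of the two finite sums forces both inequalities to be equalities.

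Finally I would set $B=\bigcup_n C_n$, which is Borel and contains $E$. Since the $Y_n$ are disjoint and $C_n\subseteq Y_n$, countable additivity of $\mu$ on measurable sets gives, for every measurable $A$,
\begin{gather*}
\mu\xcpar{B\cap A}=\sum_n\mu\xcpar{C_n\cap A}=\sum_n\mu\xcpar{(E\cap Y_n)\cap A}=\mu\xcpar{E\cap A}\text ,
\end{gather*}
where the last equality is the additivity of the outer measure over the measurable partition $\set{Y_n}$ (which follows from countable subadditivity together with repeated Carathéodory splitting). This establishes the hull property and hence the proposition. I expect the only delicate point to be the careful use of the Carathéodory criterion to compensate for the failure of additivity of the outer measure on arbitrary sets; the reduction to $\sigma$-finiteness via separability is precisely what makes these finiteness arguments available.
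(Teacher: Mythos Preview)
Your argument is correct and is essentially the classical construction of a measurable hull (envelope) for a $\sigma$-finite Borel-regular outer measure; it even yields the stronger conclusion $\mu\xcpar{B\cap A}=\mu\xcpar{E\cap A}$ for every measurable $A$, not just for balls. The paper proceeds differently: instead of a disjoint Borel partition, it exhausts $X$ by the increasing balls $U_R=B_R(x_0)$ about a fixed point, chooses for each integer $R$ a Borel hull $B_R\supseteq E\cap U_R$ with $\mu\xcpar{B_R}=\mu\xcpar{E\cap U_R}$, and then sets $B=\bigcup_S\bigcap_{R\geq S}B_R$, checking the hull identity only against balls $B_s(y)$ (which is all the statement requires). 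Both proofs rely on the same Carath\'eodory-splitting step to upgrade ``equal total measure'' to ``equal measure on every measurable piece''; your partition-based version is slightly cleaner and more general, while the paper's exhaustion-by-balls and $\liminf$ construction stays closer to the geometric objects actually used downstream but gives a weaker intermediate statement.
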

\begin{proof}
Fix a point $x_0\in X$ and set $B_R=B_R\xpar{x_0}$ for every $R\in\N$. By hypothesis we have $0<\mu\xpar{B_R}<+\infty$ and there exists a Borel set $F_R$ such that $E\cap B_R\subseteq F_R\subseteq B_R$ and $\mu\xpar{F_R}=\mu\xpar{E\cap B_R}$ since $\mu$ is Borel-regular. Then for every $B_r(x)\subseteq B_R$ we have
\begin{gather*}
\begin{split}
\mu\xpar{E\cap B_R}&\leq \mu\xpar{E\cap B_r(x)}+\mu\xpar{E\cap B_R\setminus B_r(x)}\\
&\leq \mu\xpar{F_R\cap B_r(x)}+\mu\xpar{F_R\setminus B_r(x)}\\
&=\mu\xpar{F_R}=\mu\xpar{E\cap B_R}
\end{split}
\end{gather*}
which implies $\mu\xpar{E\cap B_r(x)}=\mu\xpar{F_R\cap B_r(x)}$ for every $B_r(x)\subseteq B_R$.

Now let $F=\bigcup^{\infty}_{S=1}\bigcap^{\infty}_{R=S}F_R$, clearly $E\subseteq F$ and $F$ is a Borel set. For every ball $B_r(x)\subseteq X$ there exists $\tilde{S}\in\N$ such that $B_r(x)\subseteq B_{\tilde S}$ and so
\begin{gather*}
\mu\xpar{F\cap B_r(x)}=\lim_{S\to +\infty}\mu\xpar{\bigcap^{+\infty}_{R=S}\xpar{F_R\cap B_r(x)}}\leq \mu\xpar{E\cap B_r(x)}\text .
\end{gather*}
Therefore $\mu\xpar{F\cap B_r(x)}=\mu\xpar{E\cap B_r(x)}\leq \mu\xpar{F'\cap B_r(x)}$ for any other Borel set $F'$ containing $E\cap B_r(x)$.
\end{proof}

A point $x$ of $E$ is said to be a $\mu$-interior point (for $E$) if and only if there exists $B\subseteq E$ Borel set such that
\begin{gather*}
    \lim_{r\to 0^+}\frac{\mu\xcpar{B_r(x)\setminus B}}{\mu\xcpar{B_r(x)}}=0\text .
\end{gather*}
Let also $\densun E\subseteq E$ be the set of all the $\mu$-interior points for $E$, and if $x\in\densun E$ then we say that $E$ is a $\mu$-nei\-ghbor\-hood for $x$.

Notice that $E$ and $\densun E$ may not be $\mu$-measurable sets, however if $E$ is $\mu$-measurable then $\mu\xpar{E\setminus\densun E}=0$ by using the doubling property of $\mu$ (see for example \cite{Rigot2021}). Moreover, if $E$ and $F$ are two $\mu$-nei\-ghbor\-hoods for $x$ then $E\cap F$ is another $\mu$-neighborhood for $x$ because
\begin{gather*}
\lim_{r\to 0^+}\frac{\mu\xcpar{B_r(x)\setminus (E\cap F)}}{\mu\xcpar{B_r(x)}}\leq \lim_{r\to 0^+}\frac{\mu\xcpar{B_r(x)\setminus E}}{\mu\xcpar{B_r(x)}}+\lim_{r\to 0^+}\frac{\mu\xcpar{B_r(x)\setminus F}}{\mu\xcpar{B_r(x)}}=0\text .
\end{gather*}

Therefore the family of $\mu$-neighborhoods defines a topology on $X$, that we will call the $\mu$-topology. This topology is stronger than the metric topology induced by $\poym$ on $X$ (which we will also call $\poym$-topology).

\begin{proposition}
For every $x\in X$ all the following statements are equivalent:
\begin{itemize}
\item $x$ is an isolated point (for $X$) with respect to the $\poym$-topology;
\item $x$ is a $\mu$-isolated point;
\item $\mu\xpar{\set x}>0$.
\end{itemize}
\end{proposition}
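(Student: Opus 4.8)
The plan is to prove the three-way equivalence by establishing a cycle of implications. I would show that the first statement implies the third, the third implies the second, and the second implies the first. The key tools are the doubling hypothesis (which forces $0<\mu[B_r(x)]<+\infty$ for every radius) and the definition of $\mu$-isolated point, which unwinds to the statement that the singleton $\set x$ is a $\mu$-neighborhood of $x$.

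First I would treat the implication that $\poym$-isolation gives positive measure of the singleton. If $x$ is $\poym$-isolated, there is some $\rho>0$ with $B_\rho(x)=\set x$; then $\mu\xpar{\set x}=\mu\xcpar{B_\rho(x)}>0$ directly from the doubling axiom, which guarantees strict positivity of every ball's measure. This step is essentially immediate.

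Next I would show that $\mu\xpar{\set x}>0$ forces $x$ to be a $\mu$-isolated point. Unwinding the definition, being $\mu$-isolated means $\set x$ is a $\mu$-neighborhood of $x$, i.e. there is a Borel set $B\subseteq\set x$ with $\lim_{r\to 0^+}\mu\xcpar{B_r(x)\setminus B}/\mu\xcpar{B_r(x)}=0$. I would take $B=\set x$ itself, which is Borel, and estimate
\begin{gather*}
\frac{\mu\xcpar{B_r(x)\setminus\set x}}{\mu\xcpar{B_r(x)}}=1-\frac{\mu\xpar{\set x}}{\mu\xcpar{B_r(x)}}\text .
\end{gather*}
Since $\mu\xcpar{B_r(x)}$ decreases to $\mu\xpar{\set x}$ as $r\to 0^+$ (the balls shrink to the singleton by continuity of measure from above, using that $\mu\xcpar{B_r(x)}<+\infty$), and $\mu\xpar{\set x}>0$ by assumption, the ratio tends to $0$; hence $\set x$ is a $\mu$-neighborhood and $x$ is $\mu$-isolated.

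Finally I would prove that $\mu$-isolation implies $\poym$-isolation, which is the only direction requiring care and is therefore the main obstacle. Here the $\mu$-topology being strictly finer than the $\poym$-topology does not immediately give the reverse inclusion of isolated points, so I cannot argue purely topologically. Instead, suppose $x$ is $\mu$-isolated, so $\set x$ is a $\mu$-neighborhood, and argue by contradiction assuming $x$ is not $\poym$-isolated, meaning every ball $B_r(x)$ contains a point other than $x$. I would then need to show $\lim_{r\to 0^+}\mu\xcpar{B_r(x)\setminus\set x}/\mu\xcpar{B_r(x)}=0$ is impossible. The delicate point is that an accumulation point could in principle still have most of its nearby mass concentrated at $x$; the doubling property is what rules this out. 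Concretely, pick a sequence $x_n\to x$ with $x_n\neq x$, set $r_n=2\poym(x,x_n)$, and compare $\mu\xcpar{B_{r_n}(x)}$ with the measure of a small ball around $x_n$ contained in $B_{r_n}(x)$: the remark's quantitative doubling estimate bounds $\mu\xcpar{B_{r_n}(x)}$ by a constant multiple of $\mu$ of such a ball, forcing $\mu\xcpar{B_{r_n}(x)\setminus\set x}$ to remain a fixed positive fraction of $\mu\xcpar{B_{r_n}(x)}$ along this sequence, contradicting that the ratio vanishes. This contradiction completes the cycle.
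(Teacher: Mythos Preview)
Your proof is correct and essentially matches the paper's: both establish a cycle of implications with two easy steps and one substantive step that uses the doubling property to show that a metric accumulation point cannot have $\set x$ as a $\mu$-neighborhood (equivalently, cannot have $\mu\xpar{\set x}>0$). The only cosmetic difference is that your cycle runs $(1)\Rightarrow(3)\Rightarrow(2)\Rightarrow(1)$ while the paper's runs $(1)\Rightarrow(2)\Rightarrow(3)\Rightarrow(1)$, but the core doubling argument---comparing the measure of a small ball around a nearby point $y\neq x$ with the measure near $x$---is the same in both.
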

\begin{proof}
If $x$ is an isolated point then there exists $r>0$ such that $B_r(x)=\set x$, and since $B_r(x)$ is itself a $\mu$-neighborhood for $x$ then this would automatically imply that $x$ is a $\mu$-isolated point.

Next if $x$ is $\mu$-isolated then $\set x$ is a $\mu$-neighborhood for $x$ so
\begin{gather*}
\lim_{r\to 0^+}\frac{\mu\xcpar{B_r(x)\setminus \set x}}{\mu\xcpar{B_r(x)}}=0
\end{gather*}
which is possible only if $\mu(\set x)>0$ otherwise $\mu\xcpar{B_r(x)\setminus \set x}=\mu\xcpar{B_r(x)}$.

Finally, we only need to prove that if $\mu(\set x)=\delta>0$ then $x$ is an isolated point, that is $B_r(x)=\set x$ for some $r>0$. First to all notice that, since $\mu\xcpar{B_r(x)}<+\infty$, $\lim_{r\to 0^+} \mu\xcpar{B_r(x)\setminus\set x}=0$. Therefore there exists $\tilde r>0$ such that $\mu\xcpar{B_r(x)\setminus\set x}<\delta/K$ for every $0<r<\tilde r$.

By contradiction $x$ is not an isolated point in $X$, therefore fixed $0<r<\tilde r$ there exist $y\neq x$ and $s>0$ such that
\begin{itemize}
\item $B_s(y)\subseteq B_r(x)\setminus\set x$;
\item $x\in B_{2s}(y)$.
\end{itemize}
Then
\begin{gather*}
\mu\xcpar{B_s(y)}\leq \mu\xcpar{B_r(x)\setminus\set x}<\frac\delta K=\frac 1K\mu\xpar{\set x}\leq \frac 1K\mu\xcpar{B_{2s}(y)}\leq \mu\xcpar{B_s(y)}
\end{gather*}
which is impossible, therefore $\set x$ must be an isolated point.
\end{proof}

\begin{proposition}
\label{urra}Consider any nonempty set $U\subseteq X$. Then for every $x\in \densun U$ we can find a map $\fun{r_x}{X}{U}$ such that $r_x(y)=y$ on $U$ and
\begin{gather*}
    \poym\xcpar{y, r_x(y)}=o\xpar{\poym\xpar{y, x}}\qquad\text{for }\poym(y, x)\to 0\text .
\end{gather*}
\end{proposition}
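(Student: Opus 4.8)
The plan is to first reduce the statement to a purely metric-measure estimate, and then to build $r_x$ by an approximate nearest-point projection onto a full-density Borel subset of $U$. Since $x\in\densun U$, by definition there is a Borel set $B\subseteq U$ with
\[
\lim_{r\to 0^+}\frac{\mu\xcpar{B_r(x)\setminus B}}{\mu\xcpar{B_r(x)}}=0\text .
\]
Writing $g(y)=\dist\xpar{y, B}=\inf_{b\in B}\poym(y, b)$, the heart of the argument will be to show that $g(y)=o\xpar{\poym(y, x)}$ as $y\to x$; granting this, $r_x$ is obtained by sending each $y\in U$ to itself and each $y\notin U$ to a point of $B\subseteq U$ that almost realizes $g(y)$.

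The main step, and the only place where the doubling hypothesis enters, is the density estimate $g(y)=o\xpar{\poym(y, x)}$. I would argue by contradiction: were it false there would exist $\varepsilon>0$ and a sequence $y_n\to x$ with $y_n\neq x$ and $g(y_n)\geq\varepsilon\,\poym(y_n, x)$. Setting $r_n=\poym(y_n, x)\to 0$, the bound $g(y_n)\geq\varepsilon r_n$ forces the open ball $B_{\varepsilon r_n}(y_n)$ to miss $B$, while the triangle inequality gives $B_{\varepsilon r_n}(y_n)\subseteq B_{(1+\varepsilon)r_n}(x)$, so that
\[
B_{\varepsilon r_n}(y_n)\subseteq B_{(1+\varepsilon)r_n}(x)\setminus B\text .
\]
Applying the inequality of the Remark with centers $x$ and $y_n$ (hence $\poym(x, y_n)=r_n$) and radii $(1+\varepsilon)r_n$ and $\varepsilon r_n$ produces a constant $C=\symcal L\xpar{K, \tfrac{2+\varepsilon}{\varepsilon}}$, independent of $n$, with $\mu\xcpar{B_{(1+\varepsilon)r_n}(x)}\leq C\,\mu\xcpar{B_{\varepsilon r_n}(y_n)}$. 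Combining this with the inclusion above gives
\[
\frac{\mu\xcpar{B_{(1+\varepsilon)r_n}(x)\setminus B}}{\mu\xcpar{B_{(1+\varepsilon)r_n}(x)}}\geq\frac 1C>0
\]
for every $n$, contradicting the density-$1$ property of $B$ at $x$ since $(1+\varepsilon)r_n\to 0$. (If $x$ is $\poym$-isolated there is no such sequence and the estimate holds vacuously.)

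Finally, to construct $r_x$ I would set $r_x(y)=y$ for $y\in U$ and, for $y\notin U$, choose $r_x(y)\in B$ with $\poym\xcpar{y, r_x(y)}<g(y)+\poym(y, x)^2$; this is possible because $y\notin U$ while $x\in U$ forces $\poym(y, x)>0$, so the right-hand side strictly exceeds the infimum $g(y)$. By construction $r_x$ is the identity on $U$ and takes values in $U$ (as $B\subseteq U$), and the uniform bound $\poym\xcpar{y, r_x(y)}\leq g(y)+\poym(y, x)^2$, valid for all $y$, yields
\[
\frac{\poym\xcpar{y, r_x(y)}}{\poym(y, x)}\leq\frac{g(y)}{\poym(y, x)}+\poym(y, x)\longrightarrow 0
\]
as $y\to x$, by the density estimate. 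I expect the density estimate of the second paragraph to be the only genuine obstacle; the reduction and the projection step are routine once the doubling inequality is applied with the correct choice of radii.
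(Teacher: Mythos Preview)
Your proposal is correct and follows essentially the same route as the paper: both reduce to showing that $\dist(y,B)=o\xpar{\poym(y,x)}$ for a Borel density-one subset $B\subseteq U$, argue by contradiction that a small ball $B_{\varepsilon r_n}(y_n)$ would miss $B$ while sitting inside $B_{(1+\varepsilon)r_n}(x)$, and then invoke the doubling inequality to contradict the density condition. Your write-up is slightly more explicit in constructing $r_x$ via an approximate nearest-point selection (the $+\poym(y,x)^2$ slack), whereas the paper leaves this step implicit after establishing the $\varepsilon$--$\delta$ statement; otherwise the arguments coincide.
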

\begin{proof}
We can assume without loss of generality that $x$ is an accumulation point (that is, $B_r(x)\neq \set x$ for every $r>0$ or equivalently $\mu(\set x)=0$) and $U$ is a Borel set. For every $N\in\N$ we can find $\delta_N>0$ such that for every $0<r<\delta_N$
\begin{gather*}
    \frac{\mu\xcpar{B_r(x)\setminus U}}{\mu\xcpar{B_r(x)}}\leq \frac 1N\text .
\end{gather*}

In order to prove our statement we need only to prove that for every $\varepsilon>0$ there exists $\delta>0$ so that for every $y\in X$ with $\poym\xpar{y, x}<\delta$ there exists $y'\in U$ such that
\begin{gather}
    \label{disce}
    \poym\xpar{y, y'}< \varepsilon\poym\xpar{y, x}\text .
\end{gather}
If \eqref{disce} does not hold then there exists $\varepsilon_0>0$ such that for every $N\in\N$ there exists $y_N$ such that $0<r_N=\poym\xpar{y_N, x}<\delta_N/\xpar{1+\varepsilon_0}$ but $B_{\varepsilon_0r_N}\xpar{y_N}\cap U=\emptyset$. Therefore, since we have $\xpar{1+\varepsilon_0}r_N<\delta_N$,
\begin{multline*}
\mu\xcpar{B_{\varepsilon_0r_N}\xpar{y_N}}\leq \mu\xcpar{B_{\xpar{1+\varepsilon_0}r_N}(x)\setminus U}\leq\frac 1N\mu\xcpar{B_{(1+\varepsilon_0)r_N}(x)}\\
\leq \frac 1N\mu\xcpar{B_{(2+\varepsilon_0)r_N}(y_N)}\leq \frac{K_{\varepsilon_0}}{N}\mu\xcpar{B_{\varepsilon_0r_N}(y_N)}
\end{multline*}
where $K_{\varepsilon_0}$ is a constant that depends only on $\varepsilon_0>0$ and $\mu$ but not on $N$. Therefore, for $N$ sufficiently large we will get a contradiction.
\end{proof}

Limits in the $\mu$-topology coincide with the approximate limits with respect to measure $\mu$, that is let $\fun gA\R$ be a function defined on a nonempty set $A\subseteq X$ and let $x\in X$ be a $\mu$-accumulation point for $A$ (that is, $A\cap U\setminus\set x\neq\emptyset$ for every $\mu$-neighborhood $U$ of $x$). Then we have $\mu-\lim_{A\ni y\to x}g(y)=z\in\R$ if and only if for every $\varepsilon>0$ there exists a $\mu$-neighborhood $U$ of $x$ such that $\abs{g(y)-z}<\varepsilon$ for every $y\in A\cap U\setminus\set x$.

Moreover, for real-valued functions $\fun gA\R$ and $\mu$-accumulation point $x$ for $A\subseteq X$ we set 
\begin{gather*}
\mu-\limsup_{A\ni y\to x}g(y)=\inf_{x\in \densun U}\sup_{y\in U\cap A\setminus\set x}g(y)\text .
\end{gather*}

\begin{proposition}
\label{diffty}Let $\fun{f}B{\coint{0}{+\infty}}$ be a real-valued Lipschitz function defined on a $\mu$-neighborhood $B$ of $x$ with $\mu(\set x)=0$ and $f(x)=0$. Then
\begin{gather*}
\limsup_{B\ni y\to x}\frac{f(y)}{\poym(y, x)}=\mu-\limsup_{B\ni y\to x}\frac{f(y)}{\poym(y, x)}\text .
\end{gather*}
\end{proposition}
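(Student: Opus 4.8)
The plan is to prove the two inequalities separately, writing $L=\limsup_{y\to x}f(y)/\poym(y, x)$ and $L_\mu=\mu-\limsup_{y\to x}f(y)/\poym(y, x)$. Since $f$ is $\Lambda$-Lipschitz and $f(x)=0$, we have $f(y)=\abs{f(y)-f(x)}\le\Lambda\poym(y, x)$, so the difference quotient is bounded by $\Lambda$ and in particular $L\le\Lambda<+\infty$. For the inequality $L_\mu\le L$ I would use that the $\mu$-topology is stronger than the $\poym$-topology: every metric ball $B_r(x)$ is $\poym$-open, hence $\mu$-open, hence a $\mu$-neighborhood of $x$, i.e.\ $x\in\densun{B_r(x)}$. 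Thus the infimum defining $L_\mu$ ranges over a family of sets containing all the balls that compute $L$, and an infimum over a larger family is no larger; this gives $L_\mu\le L$ at once.

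The substantial direction is $L\le L_\mu$, and here the key tool is Proposition \ref{urra}. Fix an arbitrary $\mu$-neighborhood $U$ of $x$. Because $B$ is itself a $\mu$-neighborhood and the $\mu$-neighborhoods of $x$ are stable under finite intersection, I may replace $U$ by $U\cap B$ and so assume $U\subseteq B$. Since $x\in\densun U$, Proposition \ref{urra} furnishes a map $\fun{r_x}{X}{U}$ with $r_x(y)=y$ on $U$ and $\poym\xpar{y, r_x(y)}=o\xpar{\poym(y, x)}$.

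Next I would pick a sequence $y_n\to x$ in the $\poym$-metric with $y_n\neq x$ and $f(y_n)/\poym(y_n, x)\to L$, and set $z_n=r_x(y_n)\in U$. The little-o bound gives $\poym(y_n, z_n)=o\xpar{\poym(y_n, x)}$, whence $\poym(z_n, x)=\poym(y_n, x)\xpar{1+o(1)}$ (so $z_n\neq x$ for $n$ large), while the Lipschitz estimate yields $\abs{f(z_n)-f(y_n)}\le\Lambda\poym(y_n, z_n)=o\xpar{\poym(y_n, x)}$. Dividing numerator and denominator by $\poym(y_n, x)$ one obtains $f(z_n)/\poym(z_n, x)\to L$. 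As each $z_n\in U\cap B\setminus\set x$, this forces $\sup_{z\in U\cap B\setminus\set x}f(z)/\poym(z, x)\ge L$; taking the infimum over all $\mu$-neighborhoods $U$ gives $L_\mu\ge L$, and together with the first step $L=L_\mu$.

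The main obstacle is precisely this second inequality, and its crux is that Proposition \ref{urra} provides a projection onto $U$ whose displacement is not merely small but of order $o\xpar{\poym(y, x)}$. This is exactly the strength required so that passing from $y_n$ to its projection $z_n$ perturbs neither the denominator $\poym(y_n, x)$ nor, via the Lipschitz bound, the numerator $f(y_n)$ to leading order, leaving the limit of the quotient unchanged. The Lipschitz hypothesis is what converts the $o\xpar{\poym}$ displacement into an $o\xpar{\poym}$ change of $f$, while $f(x)=0$ keeps the quotient bounded so that these asymptotic cancellations are legitimate.
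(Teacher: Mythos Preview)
Your proposal is correct and follows essentially the same approach as the paper: both use Proposition~\ref{urra} to project points near $x$ into a given $\mu$-neighborhood and then invoke the Lipschitz bound together with $\poym\xpar{y,r_x(y)}=o\xpar{\poym(y,x)}$ to show the difference quotient is preserved. The only cosmetic difference is that you argue via a sequence realizing the $\limsup$, whereas the paper bounds $\sup_{y\in B_\delta(x)\cap B\setminus\set x}f(y)/\poym(y,x)$ directly by $(1+\varepsilon)\sup_{y\in U\cap B\setminus\set x}f(y)/\poym(y,x)+K\varepsilon$ and then lets $\varepsilon\to 0$.
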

\begin{proof}
The inequality $\geq$ is trivial. To prove the opposite inequality take any $\mu$-neigh\-borhood $U$ of $x$ and set $C=U\cap B\setminus\set x$. Let $\fun{r_x}{X\setminus\set x}C$ as in proposition \ref{urra}, then for every $\varepsilon>0$ there exists $\delta>0$ such that if $y\in B_\delta(x)$ then $\poym[r_x(y), y]\leq\varepsilon\poym(y, x)$. Therefore,
\begin{multline*}
\sup_{y\in B_\delta(x)\cap B\setminus\set x}\frac{f(y)}{\poym(y, x)}\leq \sup_{y\in C}\frac{f(y)}{\poym(y, x)}\sup_{y\in B_\delta(x)\setminus\set x}\frac{\poym\xcpar{r_x(y), x}}{\poym(y, x)}\\
+ \sup_{y\in B_\delta(x)\cap B\setminus\set x}\frac{\abs{f(y)-f[r_x(y)]}}{\poym(y, x)}\leq (1+\varepsilon)\sup_{y\in C}\frac{f(y)}{\poym(y, x)}+K\varepsilon
\end{multline*}
for some $K>0$. Thus
\begin{gather*}
\limsup_{B\ni y\to x}\frac{f(y)}{\poym(y, x)}\leq (1+\varepsilon)\mu-\limsup_{B\ni y\to x}\frac{f(y)}{\poym(y, x)} +K\varepsilon
\end{gather*}
which proves our statement.
\end{proof}

Analogously to the definitions of $\abgr{f}(x)$ and $A_f$ stated in theorem \ref{main} for $Q$-valued functions, we the following quantities for a generic function $f$ defined on a nonempty subset $A$ of $X$ with values in some metric space $\xpar{M, d}$:
\begin{align*}
\abgr f(x) &= \limsup_{A\ni y\to x}\frac{d\xcpar{f(y), f(x)}}{\poym(x, y)}\\
\abgrm f(x) &= \mu-\limsup_{A\ni y\to x}\frac{d\xcpar{f(y), f(x)}}{\poym(x, y)}
\end{align*}
\begin{align}
A_f &=\iset{x\in A}{x\ \text{accumulation point, }\abgr f(x)<+\infty}\label{af:j}\\
A_f^\mu &=\iset{x\in A}{x\ \mu\text{-accumulation point, }\abgrm f(x)<+\infty}\label{af:mu}\text .
\end{align}
Clearly we have $\abgrm f(x)\leq \abgr f(x)$ and $A_f\subseteq A_f^\mu\cup N$ where $\mu(N)=0$, because $A_f\setminus A_f^\mu$ may contain accumulation points for $A$ that are $\mu$-isolated. This set should have measure zero since $\mu$ is Borel regular and so there exists $\tilde A\supseteq A$ Borel set such that $\mu\xcpar{B_r(x)\cap\tilde A}=\mu\xcpar{B_r(x)\cap A}$ for every $x\in X$, $r>0$.

Even if $f$ is a Lipschitz function defined on a closed set $C\subseteq X$ we can have $A_f^\mu\neq C$, however if $C$ only contains accumulation points for $X$ then $\mu\xpar{C\setminus A_f^\mu}=0$.

Now, consider a complete metric space $\xpar{M, d}$. We want to define a notion of differentiability for functions defined on $X$ with values in $M$. For every topology $\tau$ on $X$ stronger than or equal to the $\poym$-topology (in particular, we set $\tau=\poym$ for the $\poym$-topology and $\tau=\mu$ for the $\mu$-topology), every $x\in X$, $P\in M$ let $\cspace MXxP\tau$ be the set of all the equivalence classes $\xcpar{F}_\sim$, where $\fun FBM$ is a Lipschitz function defined on a $\tau$-nei\-ghbor\-hood $B\subseteq X$ of $x$ such that $F(x)=P$, with respect to the equivalence relation 
\[ 
F\sim G\Leftrightarrow \lim_{y\to x} \frac{d[F(y), G(y)]}{\poym\xpar{y, x}}=0\text{ (with respect to }\tau\text{ topology).}
\]

For every $(x, P)\in X\times M$ the space $\xcpar{\cspace MXxP\tau, d_{x, P}, 0_P, \abs{\cdot}_{x, P}}$ is a pointed metric space where
\begin{gather*}
d_{x, P}([F], [G])=\limsup_{y\to x}\frac{d[F(y), G(y)]}{\poym\xpar{y, x}}\\
0_P=\xcpar{\anfun{x\in X}{P\in M}}\\
\abs{[F]}_{x, P}=d_{x, P}\xpar{[F], 0_P}\text .
\end{gather*}
We also define
\begin{gather*}
\cbund MX\tau=\bigsqcup_{(x, P)\in X\times M}\cspace MXxP\tau
\end{gather*}
and we call the elements of $\cbund MX\tau$ the $X$-\emph{differential germs} of $M$ with respect to the topology $\tau$. In particular, if $\tau=\poym$ then the elements of $\cbund MX\poym$ are just called $X$-\emph{differential germs}, whereas if $\tau=\mu$ then elements of $\cbund MX\mu$ are called \emph{approximate} $X$-\emph{differential germs}.

When $\tau$ is the topology induced by the metric $\poym$ then arbitrary subsets $\symcal D$ of $\cbund MX\poym$ are called $X$\emph{-distributions} for $M$, and for every $(x, P)\in X\times M$ we set $\symcal D_{x, P}=\symcal D\cap\cspace MXxP\poym$. Analogously, when $\tau$ is the topology generated by the measure $\mu$ as before then arbitrary subsets of $\cbund MX\mu$ are instead called \emph{approximate} $X$-\emph{distributions}.

\subsection{Differentiation and completeness}
\begin{definition}
Let $A$ be a nonempty subset of $X$, $\symcal D$ be an $X$-distribution, and let $\fun fAM$ be an arbitrary function. For every $x\in A$ accumulation point we say that $f$ is $\symcal D$\emph{-differentiable} at $x$ if and only if there exists $[\fun FBM]\in\symcal D_{x, f(x)}$ such that
\begin{gather}\label{diff}
\lim_{A \ni y\to x}\frac{d\xcpar{f(y), F(y)}}{\poym\xpar{y, x}}=0\text .
\end{gather}
Moreover, the differential germ $[F]$ that satisfies \eqref{diff} is a $\symcal D$\emph{-differential} of $f$ at $x$.

If $\symcal D$ is an approximate $X$-distribution and $x$ is a $\mu$-accumulation point for $A$ then $f$ is \emph{approximately $\symcal D$-differentiable} at $x$ if and only if
\begin{gather*}
\mu-\lim_{A \ni y\to x}\frac{d\xcpar{f(y), F(y)}}{\poym\xpar{y, x}}=0
\end{gather*}
and in that case $[F]$ is an \emph{approximate $\symcal D$-differential} for $f$ at $x$.
\end{definition}

\begin{remark}
If $x$ is a $\mu$-interior point for $A$ then $f$ admits at most one approximate differential at $x$. However, if $x$ is a boundary point then the differential may not be unique and $f$ could have infinitely many different differentials.
\end{remark}

\begin{proposition}
\label{uniq1}Let $\fun fAM$ be an arbitrary function that is $\symcal D$-dif\-fe\-ren\-tia\-ble at some accumulation point $x\in A$. Then $x\in A_f$ and $\abgr f(x)\leq \abs{F}_{x, f(x)}$ where $[F]\in\symcal D_{x, f(x)}$ is any differential for $f$. 

If in addition $A$ is a $\mu$-neighborhood for $x$ then $\abgr f(x)=\abs{\udif fx}_{x, f(x)}$ where $\udif fx=\xcpar{F}\in\symcal D_{x, f(x)}$ is the unique differential of $f$ at $x$.
\end{proposition}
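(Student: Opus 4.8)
The plan is to prove the two assertions in turn: first the inequality for an arbitrary domain $A$, and then the equality together with uniqueness once $A$ is a $\mu$-neighborhood of $x$. \emph{First part.} Let $[\fun FBM]\in\symcal D_{x, f(x)}$ be any differential of $f$ at $x$; by definition of $\symcal D_{x, f(x)}$ we have $F(x)=f(x)$, and $B$ is a $\poym$-neighborhood of $x$, so $A\cap B_r(x)\subseteq B$ for some $r>0$. For $y\in A$ close to $x$ I would use the triangle inequality in the form
\[
\frac{d[f(y), f(x)]}{\poym(y,x)}\leq \frac{d[f(y), F(y)]}{\poym(y,x)}+\frac{d[F(y), F(x)]}{\poym(y,x)}
\]
and take $\limsup_{y\in A\to x}$: the first summand vanishes by \eqref{diff}, while the second is at most $\abs{F}_{x, f(x)}$, since restricting the $\limsup$ from $B$ to $A$ cannot increase it. As $F$ is Lipschitz, $\abs{F}_{x, f(x)}<+\infty$, so $\abgr f(x)\leq\abs{F}_{x, f(x)}<+\infty$, giving both $x\in A_f$ and the claimed bound.

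\emph{Uniqueness.} Assume now that $A$ is a $\mu$-neighborhood of $x$ (recall $\mu(\set x)=0$ because $x$ is an accumulation point). If $[F]$ and $[G]$ are two differentials, set $h(y)=d[F(y), G(y)]$: this is a nonnegative Lipschitz function on the $\poym$-neighborhood $\mathrm{dom}\,F\cap\mathrm{dom}\,G$ with $h(x)=0$, so Proposition \ref{diffty} equates $\limsup_{y\to x}h(y)/\poym(y,x)$ with the corresponding approximate $\limsup$. Since both germs satisfy \eqref{diff}, $h(y)/\poym(y,x)\to 0$ along $A$; testing the approximate $\limsup$ on the sets $A\cap B_\delta(x)$ (which are $\mu$-neighborhoods, being intersections of two such) shows it equals $0$. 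Hence $d_{x, f(x)}([F], [G])=0$, i.e.\ $[F]=[G]$, so the differential $\udif fx$ is well defined.

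\emph{Equality.} It remains to prove $\abs{\udif fx}_{x, f(x)}\leq\abgr f(x)$, the reverse of the first part. I would apply Proposition \ref{diffty} once more, now to $g(y)=d[F(y), F(x)]$ (nonnegative, Lipschitz, $g(x)=0$), so that $\abs{F}_{x, f(x)}$ equals the approximate $\limsup$ of $g(y)/\poym(y,x)$. Along $A$ the triangle inequality gives $g(y)\leq d[F(y), f(y)]+d[f(y), f(x)]$, hence on a $\mu$-neighborhood $A\cap B_\delta(x)$ the ratio $g(y)/\poym(y,x)$ is eventually at most $\abgr f(x)+2\varepsilon$, using \eqref{diff} and the definition of $\abgr f(x)$. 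Letting $\varepsilon\to 0$ yields $\abs{F}_{x, f(x)}\leq\abgr f(x)$, which together with the first part closes the equality.

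\emph{Main obstacle.} The delicate point is that $\abs{\cdot}_{x, f(x)}$ and $d_{x, f(x)}$ are genuine metric $\limsup$'s taken over the whole domain of the germ, whereas \eqref{diff} and the definition of $\abgr f(x)$ only control the relevant ratios along $A$. Bridging this gap is precisely the role of the $\mu$-neighborhood hypothesis, and Proposition \ref{diffty} is the tool that converts each metric $\limsup$ into an approximate one computable on the $\mu$-neighborhoods $A\cap B_\delta(x)\subseteq A$. An equivalent route would run every estimate through the near-retraction $r_x$ of Proposition \ref{urra}, replacing each $y$ by $r_x(y)\in A$ up to an $o(\poym(y,x))$ error absorbed by the Lipschitz constants of $F$ and $G$; I would keep Proposition \ref{diffty} as the primary device, since it isolates the single analytic fact needed.
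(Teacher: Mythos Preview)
Your proposal is correct and follows essentially the same route as the paper: the triangle inequality for the first bound, and Proposition~\ref{diffty} together with the $\mu$-neighborhood hypothesis to transfer the metric $\limsup$'s defining $d_{x,f(x)}$ and $\abs{\cdot}_{x,f(x)}$ to approximate $\limsup$'s computable along $A$. The only cosmetic difference is the order of steps in the uniqueness and equality parts: the paper first restricts the $\mu$-$\limsup$ to $A$ and then invokes Proposition~\ref{diffty}, whereas you invoke Proposition~\ref{diffty} first and then test on the $\mu$-neighborhoods $A\cap B_\delta(x)$; the content is identical.
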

\begin{proof}
Let $[F]\in\symcal D_{x, f(x)}$ be a $\symcal D$-differential of $f$ at $x$, then
\begin{multline*}
\abgr f(x)=\limsup_{A\ni y\to x}\frac{d\xcpar{f(y), f(x)}}{\poym\xpar{y, x}}\\
\leq \limsup_{A\ni y\to x}\frac{d\xcpar{f(y), F(y)}}{\poym\xpar{y, x}}+\limsup_{y\to x}\frac{d\xcpar{F(y), F(x)}}{\poym\xpar{y, x}}=\abs{F}_{x, f(x)}\text .
\end{multline*}

Next, we prove that if $x$ is a $\mu$-interior point for $A$ then $f$ has exactly one $\symcal D$-differential at $x$ and equality holds in the last inequality. Let $[G]\in\symcal D_{x, f(x)}$ be another $\symcal D$-differential of $f$ at $x$, then we immediately get
\begin{gather*}
\lim_{A\ni y\to x}\frac{d\xcpar{F(y), G(y)}}{\poym(y, x)}=0
\end{gather*}
but this does not yet imply that $[F]=[G]$. Since $A$ is a $\mu$-neighborhood of $x$ we immediately get
\begin{multline*}
\mu-\limsup_{y\to x}\frac{d\xcpar{F(y), G(y)}}{\poym(y, x)}=\mu-\lim_{A\ni y\to x}\frac{d\xcpar{F(y), G(y)}}{\poym(y, x)}\\
\leq \lim_{A\ni y\to x}\frac{d\xcpar{F(y), G(y)}}{\poym(y, x)}=0\text .
\end{multline*}
Now both $F$ and $G$ are defined on a neighborhood of $x$, by proposition \ref{diffty} we finally get
\begin{gather*}
\limsup_{y\to x}\frac{d\xcpar{F(y), G(y)}}{\poym(y, x)}=\mu-\limsup_{y\to x}\frac{d\xcpar{F(y), G(y)}}{\poym(y, x)}=0
\end{gather*}
thus $[F]=[G]$.

Assime again that $A$ is a $\mu$-neighborhood for $x$, and let $\udif fx=[F]$ be the unique $\symcal D$-differential of $f$ at $x$. Then
\begin{multline*}
\abs{\udif fx}_{x, f(x)}=\mu-\limsup_{y\to x}\frac{d[F(y), F(x)]}{\poym(y, x)}\\
=\mu-\limsup_{A\ni y\to x}\frac{d[F(y), F(x)]}{\poym(y, x)}\leq\abgr f(x)\text .\qedhere
\end{multline*}
\end{proof}

Similarly, we can prove the following:

\begin{proposition}
\label{uniq2}Let $\fun fAM$ be an arbitrary function that is approximately $\symcal D$-differentiable at some $x\in A$. Then $x\in A_f^\mu$ and $\abgrm f(x)\leq \abs{F}_{x, f(x)}$ where $[F]\in\symcal D_{x, f(x)}$ is any approximate differential for $f$.

If in addition $A$ is a $\mu$-neighborhood for $x$ then the approximate $\symcal D$-differential $\udif fx=[F]$ of $f$ at $x$ is unique with $\abgrm f(x)=\abs{\udif fx}_{x, f(x)}$.
\end{proposition}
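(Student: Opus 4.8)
The plan is to run the argument of Proposition~\ref{uniq1} once more, but keeping every estimate inside the $\mu$-topology: each ordinary $\limsup$ is replaced by a $\mu-\limsup$ and each ordinary limit by a $\mu$-limit. The decisive simplification is that for approximate germs the equivalence relation $\sim$ is itself defined through a $\mu$-limit, so, in contrast with Proposition~\ref{uniq1}, there is no need to invoke Proposition~\ref{diffty} in order to pass from a $\mu-\limsup$ back to an ordinary one.

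For the first assertion I would set $P=f(x)=F(x)$ and start from the triangle inequality $d[f(y), f(x)]\leq d[f(y), F(y)]+d[F(y), F(x)]$. Dividing by $\poym(y, x)$ and taking $\mu-\limsup_{y\in A\to x}$, subadditivity of the $\mu-\limsup$ splits the right-hand side into two terms: the first equals $\mu-\limsup_{y\in A\to x}d[f(y), F(y)]/\poym(y, x)=0$ because $[F]$ is an approximate $\symcal D$-differential, and the second is at most $\mu-\limsup_{y\to x}d[F(y), F(x)]/\poym(y, x)=\abs{F}_{x, f(x)}$, since passing from the full domain of $F$ to $A$ can only lower the $\mu-\limsup$. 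As $F$ is Lipschitz this last quantity is finite, so $\abgrm f(x)\leq\abs{F}_{x, f(x)}<+\infty$; since approximate differentiability forces $x$ to be a $\mu$-accumulation point for $A$, we conclude $x\in A_f^\mu$.

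Now suppose $A$ is a $\mu$-neighborhood of $x$ and let $[F], [G]\in\symcal D_{x, f(x)}$ be two approximate differentials, defined on $\mu$-neighborhoods with common domain $D$. The triangle inequality gives at once $\mu-\lim_{y\in A\to x}d[F(y), G(y)]/\poym(y, x)=0$, and the heart of the matter is to upgrade this $\mu$-limit taken along $A$ into the $\mu$-limit along $D$ that the relation $\sim$ demands; this is the step I expect to be the main obstacle, and the only place where the hypothesis on $A$ enters. The mechanism is an absorbing trick: given $\varepsilon>0$, pick a $\mu$-neighborhood $U\subseteq D$ such that $d[F(y), G(y)]/\poym(y, x)<\varepsilon$ for every $y\in A\cap U\setminus\set x$, and set $V=A\cap U$. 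Since $A$ and $U$ are $\mu$-neighborhoods so is $V$, and $V\subseteq D$; hence the bound holds for every $y\in D\cap V\setminus\set x=V\setminus\set x$, which is exactly $\mu-\lim_{y\to x}d[F(y), G(y)]/\poym(y, x)=0$, i.e. $F\sim G$ and the approximate differential is unique. The same absorbing trick shows that for a $\mu$-neighborhood $A$ the $\mu-\limsup$ along $A$ agrees with the one along the full domain; applying it to $d[F(y), F(x)]/\poym(y, x)$ together with $d[F(y), F(x)]\leq d[F(y), f(y)]+d[f(y), f(x)]$ yields $\abs{\udif fx}_{x, f(x)}\leq\abgrm f(x)$, which combined with the first assertion gives the claimed equality $\abgrm f(x)=\abs{\udif fx}_{x, f(x)}$.
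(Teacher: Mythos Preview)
Your proof is correct and matches the paper's intended argument: the paper gives no separate proof for Proposition~\ref{uniq2} beyond the sentence ``In a similar fashion we can prove the following'', and your write-up is precisely the adaptation of the proof of Proposition~\ref{uniq1} to the $\mu$-topology. In particular, your observation that Proposition~\ref{diffty} is no longer needed---because for approximate germs both the equivalence relation $\sim$ and the norm $\abs{\cdot}_{x,P}$ are already defined through $\mu$-limits---is exactly the simplification that distinguishes this case from Proposition~\ref{uniq1}, and your ``absorbing trick'' (intersecting with the $\mu$-neighborhood $A$) is the same device the paper uses in the displayed chain $\abs{\udif fx}_{x,f(x)}=\mu\text{-}\limsup_{y\to x}\dotsb=\mu\text{-}\limsup_{y\in A\to x}\dotsb\leq\abgr f(x)$ at the end of the proof of Proposition~\ref{uniq1}.
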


We then say that a function $\fun fAM$ defined on a nonempty set $A\subseteq X$ is \emph{a.e.\ $\symcal D$-differentiable} on $A_f$ if and only if on every $\mu$-measurable subset $B$ of $A_f$ we have that $f$ is $\symcal D$-dif\-feren\-tiable for a.e.\ point of $B$. In the same way, we define functions a.e.\ approximately $\symcal D$-dif\-feren\-tiable on $A_f^\mu$.

\begin{definition}
An $X$-distribution/approximate $X$-distribution $\symcal D$ of $M$ is \emph{differentiably complete} if it satisfies the following \virg{Rademacher's condition}:
\begin{description}
    \item[(RC)] for every closed nonempty subset $C\subseteq X$ (with respect to the $\poym$-topology) and every Lipschitz function $\fun gCM$ we have that $g$ is a.e.\ approximately $\symcal D$-differentiable on $A_g^\mu$. 
\end{description}
\end{definition}

\begin{remark}
For Lipschitz functions defined on closed subsets $\symcal D$-differentia\-bility and approximate $\symcal D$-dif\-feren\-tiabi\-lity are equivalent for a.e.\ point of $C$, where $\symcal D$ is an $X$-dis\-tri\-bu\-tion. Moreover, both $A_g$ and $A_g^\mu$ are Borel sets with $\mu\xpar{\sdiff{A_g}{A_g^{\mu}}}$ $=0$.
\end{remark}

\begin{remark}
In (RC) we cannot assume $C=X$ since for a generic metric space $M$ we do not have a local Lipschitz extension result.
\end{remark}

It is trivial to show that
\begin{enumerate}
    \item $\symcal D=\cbund MX\mu$ is differentiably complete;
    \item if $\symcal D$ is differentiably complete then every $\symcal C\supseteq\symcal D$ is also differentiably complete;
    \item  if $\symcal D^1, \symcal D^2, \symcal D^3, \dotsc$ are all differentiably complete then $\symcal D=\bigcap^\infty_{i=1}\symcal D^i$ is differentiably complete. 
\end{enumerate}

Of these statements, only (3) is not trivial, however its proof is quite simple thanks to uniqueness results contained in propositions \ref{uniq1} and \ref{uniq2}. Let then $\fun fCM$ be a Lipschitz function defined on a closed subset $C$ of $X$, then there exists a measurable subset $V=\densun V$ of $C$ such that for every $i\in\N$ $f$ is approximately $\symcal D^i$-differentiable at $x$ with differential $\xcpar{F_i}\in\symcal D^i$. By uniqueness we have $\xcpar{F_i}=\xcpar{F_1}=\udif fx$ for every $i\in\N$, therefore $\udif fx\in\bigcap^\infty_{i=1}\symcal D^i_{x, f(x)}=\symcal D_{x, f(x)}$.

\begin{example}
Let $\set{X_\alpha}_{\alpha\in\N}$ be a disjoint Borel cover of $(X, \poym, \mu)$ and let $\fun{\Phi_\alpha}{X_\alpha}{\R^{N_\alpha}}$ be a sequence of Lipschitz functions with $N_\alpha\in\N$.

We also assume that $\mu\xpar{X_\alpha}>0$ and that $X_\alpha$ does not contain any $\mu$-isolated point (with respect to itself). Therefore for every $k\in\N$, every $x\in \densun{\xpar{X_\alpha}}$ and every $p\in\R^k$ we can set
\begin{gather*}
\symcal D_{x, p}=\iset{\xcpar{\anfun{y\in X_\alpha}{p+L\xcpar{\Phi_\alpha(y)-\Phi_\alpha(x)}}}}{\fun L{\R^{N_\alpha}}{\R^k}\text{ linear}}\text .
\end{gather*}
Clearly $\symcal D=\bigsqcup_{(x, p)\in X\times\R^k}\symcal D_{x, p}$ is an approximate $X$-distribution for $\R^k$ and an $X_\alpha$-distribution for $\R^k$ for every $\alpha$. 

A well-known result proven for the first time by Cheeger (\cite{Cheeger1999}) states that under some conditions on the base space $X$ (in particular, the existence of a $p$-Poincaré type inequality for some $1\leq p<+\infty$) then $\symcal D$ is a complete distribution for every $k\in\N$ and $\sup N_\alpha<+\infty$, in other words the metric space $X$ is \virg{locally finite dimensional}.
\end{example}

In the last part of this section, we will prove the equivalence of (RC) with a $\symcal D$-version of the classical Stepanov theorem for differentiability and approximate differentiability. We first need to generalize proposition \ref{urra}.

Let $\delta, \eta>0$, we say a measurable set $B\subseteq X$ containing $y$ is $(\delta, \eta)$-full at $y$ if
\begin{gather*}
    \sup_{0<r<\eta}\frac{\mu\xcpar{B_r(y)\setminus B}}{\mu\xcpar{B_r(y)}}\leq\delta\text .
\end{gather*}

\begin{lemma}\label{fulli}
There exists $\delta=\delta(K)>0$ such that the following statements hold:
\begin{enumerate}
\item \label{fulli:p1} if $B_x$ and $B_y$ are $(\delta, \eta)$-full for some $\eta>0$ at $x$ and $y$ respectively and $0<t=\poym(x, y)<\eta/3$ then $\mu\xcpar{B_{3t}(x)\cap B_x\cap B_{3t}(y)\cap B_y}>0$;
\item \label{fulli:p2} let $E\subseteq X$ be a nonempty set such that $x\in\densun{\xpar{\overline E}}$ and for every $y\in E$ let $B_y$ be a measurable set so that for every $z\in\overline E\setminus\set x$ sufficiently close to $x$ there exist $s, \eta>0$ such that $B_y$ is $(\delta, \eta)$-full at $y$ for every $y\in B_s(z)$, then there exist a $\mu$-neighborhood $U$ of $x$ and a map $\fun{r_x}UE$ such that $y\in B_{r_x(y)}$ for every $y\in U\setminus \set x$.
\end{enumerate}
\end{lemma}
\begin{proof}
Fix $\delta>0$ and assume by contradiction that (\ref{fulli:p1}) does not hold. Thus $\mu\xpar{B_x\cap B_t(x)}\leq\mu\xpar{ B_{3t}(y)\setminus B_y}$ with $3t<\eta$ and then
\begin{gather*}
\begin{split}
(1-\delta)\mu\xpar{B_t(x)}&\leq\mu\xpar{B_t(x)\cap B_x}\\
&\leq \mu\xpar{B_{3t}(y)\setminus B_y}\\
&\leq \delta\mu\xpar{B_{3t}(y)}\\
&\leq \delta\symcal L(K, 4)\mu\xpar{B_t(x)}
\end{split}
\end{gather*}
which is impossible when $\delta<1/\xcpar{1+\symcal L(K, 4)}$. 

To prove (\ref{fulli:p2}) we can assume without loss of generality that $x$ is a $\mu$-accumulation point, that is $\mu(\set x)=0$. Notice also that we can find a sequence $y_n\in E$ such that $\overline E=\overline{\iset{y_n}{n\in\N}}$. Then we need only to prove that
\begin{gather*}
\mu\xcpar{\overline E\cap B_\xi(x)\setminus \xpar{\set x\cup \bigcup_{n\in\N}B_{y_n}}}=0
\end{gather*}
for $\xi>0$ sufficiently small.

Otherwise we can find $Z\subseteq \overline E$ sufficiently near to $x$ such that $Z\cap B_{y_n}=\emptyset$ and
\begin{gather*}
    \sup_{0<r<\eta_0}\frac{\mu\xcpar{B_{r}(z)\cap Z}}{\mu\xcpar{B_{r}(z)}}\geq 1-\delta
\end{gather*}
for some $z\in Z\setminus \set x$ and $0<\eta_0$, in particular $Z$ is $(\delta, \eta_0)$-full at $z$. By assumption we can find $s>0$ and $0<\eta<\eta_0$ such that $B_{y_n}$ is $(\delta, \eta)$-full at $y_n$ for every $y_n\in B_s(z)$. Moreover, for $n$ large we also have $\poym\xpar{y_n, z}<\eta/3$ therefore by (\ref{fulli:p1}) we would have $Z\cap B_{y_n}\neq\emptyset$, a contradiction.
\end{proof}

\begin{theorem}\label{difcom:aplim}
Let $\symcal D$ be a differentiably complete approximate $X$-dis\-tri\-bu\-tion for $M$, then every function $\fun fAM$ is a.e.\ approximately $\symcal D$-dif\-fe\-ren\-tia\-ble on $A_f^\mu$.

Conversely let $\symcal D$ be an approximate $X$-distribution such that every function $\fun fXM$ is a.e.\ approximately $\symcal D$-dif\-feren\-tiable on $A_f^\mu$, then $\symcal D$ is differentiably complete.
\end{theorem}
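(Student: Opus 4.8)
\emph{The plan and its two halves.} I would prove the two implications separately; all the real content is in the first (Stepanov) half, the converse being a soft density argument.

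\emph{Forward direction.} Assume $\symcal D$ is differentiably complete and fix an arbitrary $\fun fAM$. It suffices to show that $f$ is approximately $\symcal D$-differentiable at $\mu$-a.e.\ $x\in A_f^\mu$: since the exceptional set is $\mu$-null, intersecting with an arbitrary $\mu$-measurable $B\subseteq A_f^\mu$ yields a.e.\ differentiability on $B$, which is the required notion. First I would cut $A_f^\mu$ into countably many Lipschitz pieces. Let $\delta=\delta(K)$ be the constant of Lemma \ref{fulli} and, for $m,k\in\N$, set
\[
E_{m,k}=\iset{x\in A_f^\mu}{\exists\, B_x\supseteq\set x\ (\delta,1/k)\text{-full at }x,\ d\xcpar{f(y),f(x)}\le m\poym(y,x)\ \forall\, y\in A\cap B_x}.
\]
Because $\abgrm f$ is an approximate $\limsup$, every $x\in A_f^\mu$ lies in some $E_{m,k}$, so $A_f^\mu=\bigcup_{m,k}E_{m,k}$; covering $X$ by countably many balls of radius $<1/(6k)$ I may further assume each piece has diameter $<1/(3k)$. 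On such a piece $E$ the map $f$ is \emph{genuinely} Lipschitz: for $x,x'\in E$ with $t=\poym(x,x')<1/(3k)$, Lemma \ref{fulli}(1) gives $\mu\xcpar{B_{3t}(x)\cap B_x\cap B_{3t}(x')\cap B_{x'}}>0$, hence a common good point $z\in A$ (lying off the low-density "bad" set, with $\poym(z,x),\poym(z,x')<1/k$) on which both controls hold, so $d\xcpar{f(x),f(x')}\le 6m\poym(x,x')$.

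Since $M$ is complete, $f|_E$ extends uniquely to a Lipschitz $\fun{g}{\overline{E}}{M}$, and $\overline{E}$ is closed, so differentiable completeness (condition (RC)) gives that $g$ is approximately $\symcal D$-differentiable at $\mu$-a.e.\ point of $A_g^\mu$, which here is $\mu$-a.e.\ point of $\overline{E}$. For such $x\in E$ write $\udif{g}{x}=[\fun{F}{\overline{E}}{M}]\in\symcal D_{x,f(x)}$. The crux is to promote $[F]$ to an approximate differential of the \emph{full} function $f$. Here I would apply Lemma \ref{fulli}(2) to $E$ and the family $\set{B_y}_{y\in E}$: at $\mu$-a.e.\ $x\in E$ (so $x\in\densun{\overline{E}}$ and the uniform fullness hypothesis holds near $x$) it produces a $\mu$-neighborhood $U$ of $x$ and a map $\fun{r_x}{U}{E}$ with $y\in B_{r_x(y)}$ and, refining the construction as in Proposition \ref{urra}, $\poym\xcpar{y,r_x(y)}=o\xpar{\poym(y,x)}$. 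Writing $y'=r_x(y)$ and using $f=g$ on $E$,
\[
d\xcpar{f(y),F(y)}\le\underbrace{d\xcpar{f(y),f(y')}}_{\le\, m\poym(y,y')}+\,d\xcpar{g(y'),F(y')}+\underbrace{d\xcpar{F(y'),F(y)}}_{\le\,\mathrm{Lip}(F)\poym(y,y')}.
\]
Dividing by $\poym(y,x)$, the first and third terms are $o(1)$ by the closeness of $r_x$, while the middle term equals $\tfrac{d\xcpar{g(y'),F(y')}}{\poym(y',x)}\cdot\tfrac{\poym(y',x)}{\poym(y,x)}$, whose second factor is bounded ($\poym(y',x)\le 2\poym(y,x)$) and whose first factor tends to $0$ in the approximate sense by differentiability of $g$ at $x$ (note $y'\in\overline{E}$ and $y'\to x$). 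Thus $\mu\text{-}\lim_{y\in A\to x}\tfrac{d\xcpar{f(y),F(y)}}{\poym(y,x)}=0$, so $f$ is approximately $\symcal D$-differentiable at $x$ with differential $[F]$; since the $E_{m,k}$ cover $A_f^\mu$ up to $\mu$-null, this settles the first assertion.

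\emph{Converse direction and the main obstacle.} For the converse, fix a closed nonempty $C\subseteq X$ and Lipschitz $\fun{g}{C}{M}$, and extend $g$ to $\fun{\tilde g}{X}{M}$ by a constant $P_0$ on $X\setminus C$. By hypothesis $\tilde g$ is approximately $\symcal D$-differentiable at $\mu$-a.e.\ $x\in A_{\tilde g}^\mu$; since $C$ is $\mu$-measurable, $\mu$-a.e.\ $x\in C$ lies in $\densun C$, so $X\setminus C$ has $\mu$-density $0$ at $x$ and every approximate limsup/limit ignores it. Hence $\abgrm{\tilde g}(x)=\abgrm g(x)$, $A_g^\mu=A_{\tilde g}^\mu\cap C$ up to $\mu$-null, and approximate differentiability of $\tilde g$ at $x$ forces that of $g$; this is exactly (RC). The conceptual difficulty is entirely in the forward crux: because $f$ is only controlled on $\mu$-full sets, the genuine Lipschitz pieces do \emph{not} exhaust $A_f^\mu$ in the $\poym$-topology, so one cannot simply differentiate $f|_E$ and restrict — Lemma \ref{fulli}(1) is what makes each piece Lipschitz and Lemma \ref{fulli}(2) (with the quantitative $\poym(y,r_x(y))=o(\poym(y,x))$) is what transports the differential of $g$ back to $f$. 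I expect the genuinely delicate points to be (i) guaranteeing the bridge point $z$ inside $A$ itself rather than merely inside a Borel hull of $A$, and (ii) verifying that $r_x$ transports $\mu$-neighborhoods to $\mu$-neighborhoods; both are where Borel-regularity and the doubling property of $\mu$ must be used.
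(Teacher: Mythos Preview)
Your approach is essentially the paper's: decompose $A_f^\mu$ into countably many pieces on which $f$ is genuinely Lipschitz via Lemma~\ref{fulli}(1), extend to the closure, apply (RC), and then transport the resulting differential back to $f$ via Lemma~\ref{fulli}(2); the converse is likewise handled by an arbitrary extension to $X$ and a density argument. The only substantive point where your sketch is thinner than the paper's is the claim that one can arrange $\poym\xcpar{y,r_x(y)}=o\xpar{\poym(y,x)}$ ``by refining as in Proposition~\ref{urra}'': Lemma~\ref{fulli}(2) as stated only yields $y\in B_{r_x(y)}$, and the paper secures the $o$-estimate by first replacing each $B_y$ with $\tilde B_y=B_y\cap B_{R(y)\poym(y,x)}(y)$, where $R(y)=\min\set{1/3,\poym(y,x)}$ depends only on $\poym(y,x)$, before invoking the lemma---this is precisely the step you flag as delicate in your point~(ii), and it also absorbs your worry about the middle term, since membership in $\tilde B_{r_x(y)}$ forces both the smallness of $\poym\xcpar{y,r_x(y)}$ and the bound $\poym\xcpar{r_x(y),x}\le 2\poym(y,x)$.
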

\begin{proof}
We first prove that if $\symcal D$ is complete then $\fun fAM$ is a.e.\ approximately $\symcal D$-differentiable on every measurable subset $B$ of $A_f^\mu$. Fix a point $P\in M$ and for every $i, j\in\N$ let $E_{ij}\subseteq A^\mu_f$ be such that $x\in E_{ij}$ if and only if
\begin{enumerate}
    \item $d[f(x), P]<\frac{i}{2j}$;
    \item there exists a $(\delta, 3/j)$-full set $B_x\subseteq A$ at $x$ such that $d[f(y), f(x)]$ $\leq i\poym(y, x)$ for every $y\in B_x$.
\end{enumerate}
The main difficulty here is that the sets $E_{ij}$ are not in general measurable. By definition $A_f^\mu=\bigcup_{ij}E_{ij}$, we now prove that $\fun{f_{ij}}{x\in E_{ij}}{f(x)\in M}$ is $6i$-Lipschitz.

Take any $y, z\in E_{ij}$, if $\poym\xpar{y, z}\geq 1/j$ then
\begin{gather*}
d[f(y), f(z)]\leq d[f(y), P]+d[P, f(z)]\leq \frac ij \leq i\poym\xpar{y, z}\text .
\end{gather*}
If instead $0<t=\poym(y, z)<1/j$ then by lemma \ref{fulli} there exists $s\in B_z\cap B_y\cap B_{3t}(z)\cap B_{3t}(y)$ such that
\begin{multline*}
d[f(y), f(z)]\leq d[f(y), f(s)]+d[f(s), f(z)]\\
\leq i\poym(y, s)+i\poym(s, z)<6i\poym(y, z)\text .
\end{multline*}

Since $M$ is complete we can uniquely extend $f_{ij}$ to a Lipschitz function on $\overline{E_{ij}}$, and since $\symcal D$ is complete there exists a measurable subset $\tilde G_{ij}$ of $A_{f_{ij}}^\mu\subseteq\overline{E_{ij}}$ such that $\mu\xpar{A_{f_{ij}}^\mu\setminus \tilde G_{ij}}=0$ and $f_{ij}$ is approximately $\symcal D$-dif\-fe\-ren\-tiable on each point of $\tilde G_{ij}$. Since $E_{ij}\subseteq A^\mu_f$, every point of $\overline{E_{ij}}$ is an accumulation point for $X$, thus $\mu\xpar{\overline{E_{ij}}\setminus A_{f_{ij}}^\mu}=0$.

Let $B_{ij}=\overline{E_{ij}}\cap B$ and $G_{ij}=\densun{\xpar{\tilde G_{ij}\cap B}}$, then we immediately get 
\[
\mu\xpar{B_{ij}\setminus G_{ij}}\leq \mu\xpar{\overline{E_{ij}}\setminus \tilde G_{ij}}+\mu\xpar{\tilde G_{ij}\setminus\densun{\tilde G_{ij}}}=0
\]
and $f_{ij}$ is approximately $\symcal D$-differentiable on every point of $G_{ij}$. Let then $x\in G_{ij}$ and $\xcpar{F_{ij}}\in \symcal D_{x, f(x)}$ be the differential of $f_{ij}$ in $x$, then for every $\varepsilon>0$ there exists a measurable $\mu$-neighborhood $U_x\subseteq X$ of $x$ such that $d\xcpar{f_{ij}(y), F_{ij}(y)}<\varepsilon\poym(y, x)$ for every $y\in U_x\cap \overline{E_{ij}}$.

Let $\fun sU{E_{ij}}$ be any function defined on a $\mu$-neighborhood $U$ of $x$ such that $\poym\xcpar{s(y), x}\leq L\poym(y, x)$ near $x$ and $\poym\xcpar{s(y), y}=o\xpar{\poym(y, x)}$ for $\poym(y, x)\to 0$. Therefore,
\begin{gather*}
\begin{split}
d\xcpar{f(y), F_{ij}(y)}&\leq d\xcpar{f(y), f\xcpar{s(y)}}+d\xcpar{f_{ij}\xcpar{s(y)}, F_{ij}\xcpar{s(y)}}\\
&\qquad+d\xcpar{F_{ij}\xpar{y}, F_{ij}\xcpar{s(y)}}\\
&\leq d\xcpar{f(y), f\xcpar{s(y)}}+o\xpar{\poym\xcpar{s(y), x}}+K\poym\xpar{s(y), y}\\
&=d\xcpar{f(y), f\xcpar{s(y)}}+o\xpar{\poym\xpar{y, x}}\text .
\end{split}
\end{gather*}

We now use lemma \ref{fulli} to find an appropriate function $s$. Let for every $y\in E_{ij}\cap B_{1/j}(x)$
\begin{gather*}
    R(y)=\min\set{\frac 13, \poym(y, x)}\\
    \tilde B_y=B_y\cap B_{R(y)\poym(y, x)}(y)\text .
\end{gather*}
Clearly each $\tilde B_y$ is $\xcpar{\delta, R(y)\poym(y, x)}$-full at $y$ (since $\poym(y, x)<1/j$) and $R(y)$ depends only on $\poym(y, x)$, therefore assumptions at point (\ref{fulli:p2}) in lemma \ref{fulli} are satisfied and there exists $\fun sU{E_{ij}}$ such that $y\in \tilde B_{s(y)}$ for $y$ near $x$.

In particular, $y\in B_{s(y)}$ and $\poym(y, s(y))\leq R\xpar{s(y)}\poym(y, x)<\frac{1}{2}\poym(y, x)$ which leads to $\poym(s(y), x)<\frac{3}{2}\poym(y, x)$. Moreover, $\poym(y, s(y))\leq \poym(y, x)^2=o\xpar{\poym(y, x)}$ when $\poym(y, x)\to 0$, therefore 
\begin{gather*}
d[f(y), f[s(y)]]\leq i\poym\xcpar{y, s(y)}=o(\poym(y, x))
\end{gather*}
because $s(y)\in E_{ij}$ and $y\in B_{s(y)}$, thus $f$ is approximately $\symcal D$-dif\-fe\-ren\-tia\-ble at $x$.

The converse is trivial. Indeed if every function $\fun fXM$ is approximately $\symcal D$-differentiable a.e.\ on $A_f^\mu$ then to prove that $\symcal D$ is complete we need only to accordingly extend any Lipschitz function $\fun gCM$ on all $X$ with $C$ closed. If $\tilde g$ is any extension of $g$ on all $X$ then we always have
\begin{gather*}
\densun C\cap A^\mu_g\subseteq A_{\tilde g}^\mu
\end{gather*}
therefore $g$ is automatically a.e.\ approximately $\symcal D$-differentiable, then $\symcal D$ is complete.
\end{proof}

\begin{theorem}\label{difcom}
Let $\symcal D$ be a differentiably complete $X$-distribution for $M$, then every function $\fun fAM$ is a.e.\ $\symcal D$-differentiable on $A_f$.

Conversely let $\symcal D$ be an $X$-distribution such that every function $\fun fXM$ is a.e.\ $\symcal D$-differentiable on $A_f$, then $\symcal D$ is differentiably complete.
\end{theorem}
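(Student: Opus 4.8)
The plan is to deduce the direct statement from the approximate version just proved, keeping the same architecture but with two modifications, and to settle the converse by an explicit retraction onto $C$ that bypasses the absence of a Lipschitz extension.

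For the direct statement, fix a $\mu$-measurable $B\subseteq A_f$; I must show $f$ is $\symcal D$-differentiable at $\mu$-a.e.\ point of $B$. As before I fix $P\in M$ and decompose $A_f=\bigcup_{i,j}E_{ij}$, but now using \emph{genuine balls} in place of the $(\delta,\eta)$-full sets: $x\in E_{ij}$ iff $d[f(x),P]<\frac{i}{2j}$ and $d[f(y),f(x)]\le i\poym(y,x)$ for every $y\in A\cap B_{3/j}(x)$. The usual case split (on whether $\poym(y,z)$ lies below or above $3/j$) shows $f_{ij}=\rstr{f}{E_{ij}}$ is $i$-Lipschitz, hence extends $i$-Lipschitzly to the closed set $\overline{E_{ij}}$ by completeness of $M$. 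By (RC) the function $f_{ij}$ is $\mu$-a.e.\ approximately $\symcal D$-differentiable on $\overline{E_{ij}}$; and since $f_{ij}$ is Lipschitz, applying Proposition \ref{diffty} to the nonnegative Lipschitz function $y\mapsto d[f_{ij}(y),F(y)]$, which vanishes at the base point, promotes this to \emph{ordinary} $\symcal D$-differentiability at $\mu$-a.e.\ point. Passing to $\mu$-interior points I obtain a full-measure set $G_{ij}\subseteq\overline{E_{ij}}\cap B$ at whose points $f_{ij}$ has an ordinary differential $[F]\in\symcal D_{x,f(x)}$ (here $f_{ij}(x)=f(x)$, since $x\in A_f$ forces $f$ to be continuous at $x$) and which lie in $\densun{(\overline{E_{ij}})}$.

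The transfer from $f_{ij}$ back to $f$ is where the genuine-ball condition is used. Fix $x\in G_{ij}$; as $x\in\densun{(\overline{E_{ij}})}$, Proposition \ref{urra} furnishes $\fun{r_x}{X}{\overline{E_{ij}}}$ with $\poym[y,r_x(y)]=o(\poym(y,x))$, this time an ordinary (not merely $\mu$-) little-$o$. For $y\in A$ near $x$ I split
\begin{multline*}
d[f(y),F(y)]\le d[f(y),f_{ij}(r_x(y))]+d[f_{ij}(r_x(y)),F(r_x(y))]\\
+d[F(r_x(y)),F(y)]\text .
\end{multline*}
The last term is $o(\poym(y,x))$ because $F$ is Lipschitz; the middle term is $o(\poym[r_x(y),x])=o(\poym(y,x))$ by differentiability of $f_{ij}$ at $x$; and the first term is $\le i\poym[y,r_x(y)]=o(\poym(y,x))$, because approximating $r_x(y)\in\overline{E_{ij}}$ by points $w\in E_{ij}$ and invoking the defining ball estimate $d[f(y),f(w)]\le i\poym(y,w)$ (valid as $y\in A\cap B_{3/j}(w)$) gives this bound in the limit $w\to r_x(y)$. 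Hence $f$ is ordinarily $\symcal D$-differentiable at $x$; since $\mu(\overline{E_{ij}}\cap B\setminus G_{ij})=0$ and $B\subseteq\bigcup_{i,j}(\overline{E_{ij}}\cap B)$, $f$ is $\symcal D$-differentiable $\mu$-a.e.\ on $B$. I expect this first-term estimate to be the crux: it is exactly the place where the full-set selection of Lemma \ref{fulli}, which yields only a $\mu$-neighborhood and hence approximate differentiability, must be upgraded, and this is what forces the switch to ball-defined pieces together with Proposition \ref{urra}.

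For the converse, suppose every $\fun fXM$ is $\mu$-a.e.\ $\symcal D$-differentiable on $A_f$, and let $\fun gCM$ be $L$-Lipschitz with $C$ closed; I must verify (RC). Choose any retraction $\fun\pi XC$ with $\pi=\mathrm{id}$ on $C$ and $\poym[x,\pi(x)]\le 2\dist(x,C)$ (no measurability is needed, since the hypothesis is stated for arbitrary functions), and set $\tilde g=g\circ\pi$, an extension of $g$. For $x\in C$ and $y\in X$ one has $\poym[\pi(y),x]\le 3\poym(y,x)$, whence $d[\tilde g(y),\tilde g(x)]\le 3L\poym(y,x)$, and therefore $C\subseteq A_{\tilde g}$. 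By hypothesis $\tilde g$ is $\mu$-a.e.\ ordinarily $\symcal D$-differentiable on $A_{\tilde g}\supseteq A_g$; restricting the defining limit to $y\in C$, where $\tilde g=g$, shows $g$ is ordinarily $\symcal D$-differentiable at those same points. Finally, an ordinary differential is a fortiori an approximate one (the $\mu$-limsup is dominated by the ordinary limsup), and $\mu(\sdiff{A_g}{A_g^\mu})=0$, so $g$ is $\mu$-a.e.\ approximately $\symcal D$-differentiable on $A_g^\mu$. This is precisely (RC), so $\symcal D$ is differentiably complete.
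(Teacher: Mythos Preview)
Your proof is correct and follows essentially the same route as the paper. For the direct part you make exactly the two modifications the paper indicates: replace the $(\delta,\eta)$-full sets $B_y$ by genuine balls $B_{3/j}(y)$ in the definition of $E_{ij}$, and replace the selection map from Lemma \ref{fulli} by the retraction $r_x$ of Proposition \ref{urra}, which is defined on a true neighborhood rather than a $\mu$-neighborhood; your explicit invocation of Proposition \ref{diffty} to upgrade approximate to ordinary differentiability of the Lipschitz piece $f_{ij}$ is the content of the paper's remark that the two notions coincide a.e.\ for Lipschitz functions. For the converse, your retraction $\pi$ with $\poym[x,\pi(x)]\le 2\dist(x,C)$ and the extension $\tilde g=g\circ\pi$ is precisely the construction the paper packages as a separate proposition, and your estimate $\abgr{\tilde g}(x)\le 3L$ for $x\in C$ is the same computation.
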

\begin{proof}
The first part of this result can be proved as in the previous theorem by setting $B_y=B_{3/j}(y)$ when $y\in E_{ij}$. Moreover, in the last passage we do not need lemma \ref{fulli}  to find $s$ but we can use directly proposition \ref{urra} such that $s$ is now defined in a neighborhood of $x$ instead of a $\mu$-neighborhood.

Now assume that every function $\fun fXM$ is a.e.\ $\symcal D$-differentiable on $A_f$, we will prove that $\symcal D$ is complete or equivalently that every Lipschitz function $\fun gCM$ defined on any closed subset $C\subseteq X$ is a.e.\ $\symcal D$-differentiable on $C$. We will find an extension $\tilde{g}$ of $g$ to whole $X$, and even though we cannot expect that such extension is Lipschitz it has still some useful properties. More precisely, for every map $\fun{g}{C}{M}$ defined on a closed set $C\subseteq X$ we can always find an extension $\fun{\tilde{g}}{X}{M}$ such that $A_g\subseteq A_{\tilde{g}}$.

We define $\tilde g$ in the following way: if $x\in C$ then we set $\tilde g(x)=g(x)$, instead if $x\notin C$ then we set $\tilde g(x)=g(y)$ where $y\in C$ is any point that satisfies
\begin{gather*}
    \poym\xpar{x, y}<2\dist(x, C)\text .
\end{gather*}

Let $x\in A_g\cap\partial C$, therefore there exist $K, \delta>0$ such that if $y\in C\cap B_\delta(x)$ then $d\xcpar{\tilde g(x), \tilde g(y)}=d\xcpar{g(x), g(y)}\leq K\poym\xpar{x, y}$.

Let $z\in B_{\delta/3}(x)\setminus C$, we can then find $y\in C\cap B_\delta(x)$ such that $\poym\xpar{y, z}<2\dist(z, C)$ and $\tilde g(z)=g(y)$. Therefore,
\begin{multline*}
d[\tilde g(x), \tilde g(z)]=d[g(x), g(y)]\leq K\poym\xpar{x, y}\leq K\xpar{\poym\xpar{x, z}+2\dist(z, C)}\\
\leq 3K\poym\xpar{x, z}
\end{multline*}
which implies $x\in A_{\tilde g}$ with $\abgr{\tilde g}(x)\leq 3\abgr g(x)$. Since $A_g\cap\overset{\circ}{C}=A_{\tilde{g}}\cap\overset{\circ}{C}$, where $\overset{\circ}{C}$ is the set of interior points of $C$, we have proved that $A_g\subseteq A_{\tilde{g}}$.

Now assume $\fun gCM$ is Lipschitz, then $A_g$ is a measurable subset of $C$ which is contained in $A_{\tilde{g}}$ where $\fun{\tilde g}XM$ is the extension found before. By assumption $\tilde g$ is a.e.\ $\symcal D$-differentiable on $A_{\tilde g}$, then $g$ is a.e.\ $\symcal D$-differentiable on $A_g$ and so $\symcal D$ is complete.
\end{proof}

\section{Q-valued functions}
\label{sec:qvl}
On a generic metric space $(M, d)$ we can define the space of $Q$-points $\mval Q{M}$, where $Q$ is a positive integer, as the set of all measures $\QP P$ on $M$ that can be represented as the sum of $Q$ Dirac measures $\xxpar{P_1}+\xxpar{P_2}+\dotsb + \xxpar{P_Q}$ not necessarily distinct. Then a $Q$-valued function from a set $A$ to $M$ is just a function defined on $A$ with values on $\mval QM$. Clearly, $1$-valued functions from $A$ to $M$ are equivalent to classical (single-valued) functions from $A$ to $M$.

Moreover, $Q$ single valued functions $f_1, f_2, \dotsc, f_Q$ can be used to compose a single $Q$-valued function $f(x)=\xxpar{f_1(x)}+\xxpar{f_2(x)}+\dotsb+\xxpar{f_Q(x)}$, conversely every $Q$-valued function $f$ can be globally decomposed into $Q$ single valued functions by choosing a total ordering on $M$. However, such decomposition is never useful except when $M$ is a really good space (usually $M=\R$) because many useful properties of $f$ will not be inherited by such general decompositions.

On $\mval QM$ we define the following \emph{Wassenstein metric}:
\begin{gather*}
\mdist{\QP A}{\QP B}=\mdist{\sum^Q_{i=1}\xxpar{A_i}}{\sum^Q_{i=1}\xxpar{B_i}}=\min_{\sigma\in\mathccal S_Q}\sqrt{\sum^Q_{i=1}d\xpar{A_i, B_{\sigma(i)}}^2}
\end{gather*}
where $\mathccal S_Q$ is the set of all the permutations on $\set{1, 2, \dotsc, Q}$. Therefore a $Q$-valued function $f$ from a topological/metric/measurable space $\Omega$ to $M$ is \emph{continuous/Lipschitz/measurable} if and only if $\fun f\Omega{\mval QM}$ is continuous/Lipschitz/measurable. Every measurable $Q$-valued function can be globally decomposed into $Q$ measurable single-valued functions (see \cite{DeLellis2011}), however continuous $Q$-valued functions cannot usually be decomposed into $Q$ continuous single-valued functions, for example the following function
\begin{gather*}
    \anfun{z\in\C}{\xxpar{z^{3/2}}+\xxpar{-z^{3/2}}\in\mval 2\C}
\end{gather*}
cannot be decomposed into two continuous functions in any neighborhood of the origin.

If $\symcal D$ is an $X$-distribution for a complete metric space $M$ then we are able to derive another $X$-distribution $\symcal D^Q$ but on $\mval QM$: for every $\QP P=\sum^Q_{i=1}\xxpar{P_i}$ we say $\xcpar{\symbf F}\in \symcal D^Q_{x_0, \QP P}$ if and only if there exist $\xcpar{F_i}\in\symcal D_{x_0, P_i}$ for every $i=1, 2, \dotsc, Q$ such that
\begin{gather}
    \symbf F(x)=\sum^Q_{i=1}\xxpar{F_i(x)}\ \forall x\text{ in a Borel neighborhood of }x_0\label{difdec}\\
    \text{if }P_i=P_j\text{ then }\xcpar{F_i}=\xcpar{F_j}\label{eqty}\text .
\end{gather}

Notice that $\mval QM$ is always a complete metric space whenever $M$ is complete, and can be proved by induction on $Q$. This is trivial for $Q=1$, thus assume both $M$ and $\mval QM$ are complete. Let $\QP{P}_l\in\mval {Q+1}M$ be a Cauchy sequence, by induction on $l$ we can easily find a subsequence in the form $\QP{P}_{h_l}=\xxpar{P_l}+\QP{P}'_l$, where both $P_l\in M$ and $\QP{P}'_l\in\mval QM$ are Cauchy sequences that converge to some $P\in M$ and $\QP{P}'\in\mval QM$ respectively. Therefore $\QP{P}_{h_l}\to \xxpar P+\QP{P}'$ which in turn implies $\QP{P}_{l}\to \xxpar P+\QP{P}'$ because $\QP{P}_{l}$ is Cauchy.

Many authors (see for example \cite{DeLellis2011}) impose condition \eqref{eqty} in their definition of differentiable $Q$-valued functions since it makes it easier to work componentwise. For example if $M=\R^n$ and $\xcpar{\symbf F}\in\symcal D^Q$ is a differential germ for $f=\sum^Q_{i=1}\xxpar{f_i}$ at some $x\in\R^k$ with $f_i$ continuous at $x$ then we can always take the $i$-th component $F_i\in\symcal D$ of $\symbf F$ so that $\xcpar{F_i}$ is a $\symcal D$-differential for $f_i$ at $x$.

Instead other authors (for example \cite{Simon2016}) do not use \eqref{eqty} in their definition of differentiable $Q$-valued functions, since with \eqref{eqty} we could have a sequence of $C^1$ $Q$-valued functions $f_n$ such that both $f_n$ and $\nabla f_n$ uniformly converge to some functions $f$ and $F$ respectively but $f$ is not differentiable everywhere. Therefore when one needs to work with sequences of $C^1$ $Q$-valued functions it may be useful to require only \eqref{difdec} without \eqref{eqty}.

However, if one needs only a.e.\ differentiability for functions from $\R^n$ into $\mval{Q}{\R^k}$ with respect to the Lebesgue measure on $\R^n$ (for example when dealing with Sobolev functions) then \eqref{eqty} is unimportant because the set of differentiability points for which \eqref{eqty} is not satisfied has zero measure. In this place we will always assume that germs in $\symcal D^Q$ satisfy both \eqref{difdec} and \eqref{eqty}.

\begin{theorem}
For every $\QP P\in\mval QM\setminus\iset{Q\xxpar P}{P\in M}$ there exist $1\leq R<Q$, an open neighborhood $U$ of $\QP P$ and two continuous maps $\fun{\pi_1^{\QP P}}{U}{\mval RM}$, $\fun{\pi_2^{\QP P}}U{\mval{Q-R}M}$ such that
\begin{itemize}
\item $\QP Q=\pi_1^{\QP P}(\QP Q)+\pi_2^{\QP P}(\QP Q)$ for every\/ $\QP Q\in U$;
\item $\mdist{\QP Q}{\QP R}^2=\mdist{\pi_1^{\QP P}(\QP Q)}{\pi_1^{\QP P}(\QP R)}^2+\mdist{\pi_2^{\QP P}(\QP Q)}{\pi_2^{\QP P}(\QP R)}^2$ for every\/ $\QP Q, \QP R\in U$.
\end{itemize}
\end{theorem}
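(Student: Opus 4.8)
The plan is to separate the support of $\QP P$ into two well-separated clusters and to let $\pi_1^{\QP P}$, $\pi_2^{\QP P}$ record which part of a nearby $Q$-point sits in each cluster. Write $\QP P=\sum_{j=1}^J k_j\xxpar{y_j}$ with the $y_j$ pairwise distinct, $k_j\geq 1$ and $\sum_j k_j=Q$; since $\QP P\notin\iset{Q\xxpar P}{P\in M}$ we have $J\geq 2$. I would set $\beta=\min_{i\neq j}d(y_i,y_j)>0$ and $\rho=\beta/4$, so that the balls $B_\rho(y_j)$ are pairwise disjoint, and then put $V_1=B_\rho(y_1)$, $V_2=\bigcup_{j=2}^J B_\rho(y_j)$, $R=k_1$ (whence $1\leq R<Q$), and $\dist(V_1,V_2)\geq\beta-2\rho=\beta/2$. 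Finally I would take the neighborhood $U=\iset{\QP Q\in\mval QM}{\mdist{\QP Q}{\QP P}<\beta/8}$, which is open and contains $\QP P$.

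First I would check that $U$ acts as a cluster detector. For $\QP Q=\sum_i\xxpar{Q_i}\in U$, an optimal permutation $\tau$ realizing $\mdist{\QP Q}{\QP P}$ gives $d(Q_i,P_{\tau(i)})\leq\mdist{\QP Q}{\QP P}<\beta/8<\rho$ for every $i$; hence each $Q_i$ lies in $B_\rho(P_{\tau(i)})\subseteq V_1\cup V_2$, with exactly the $k_1=R$ points matched to $y_1$ lying in $V_1$ and the remaining $Q-R$ in $V_2$. Because $V_1\cap V_2=\emptyset$, the number of points of $\QP Q$ in $V_1$ is always $R$, independently of the chosen representation — indeed it equals the mass $\QP Q(V_1)$. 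This lets me define $\pi_1^{\QP P}(\QP Q)=\sum_{Q_i\in V_1}\xxpar{Q_i}\in\mval RM$ and $\pi_2^{\QP P}(\QP Q)=\sum_{Q_i\in V_2}\xxpar{Q_i}\in\mval{Q-R}M$ (the restrictions of the measure $\QP Q$ to $V_1$ and $V_2$), for which the additivity property $\QP Q=\pi_1^{\QP P}(\QP Q)+\pi_2^{\QP P}(\QP Q)$ is immediate.

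The heart of the argument is the Pythagorean identity. The inequality $\leq$ is routine: concatenating an optimal matching of $(\pi_1^{\QP P}\QP Q,\pi_1^{\QP P}\QP R)$ with one of $(\pi_2^{\QP P}\QP Q,\pi_2^{\QP P}\QP R)$ produces an admissible permutation for $(\QP Q,\QP R)$ whose cost is exactly the right-hand side. The opposite inequality is where the separation is used: for $\QP Q,\QP R\in U$ the triangle inequality gives $\mdist{\QP Q}{\QP R}\leq\mdist{\QP Q}{\QP P}+\mdist{\QP P}{\QP R}<\beta/4<\dist(V_1,V_2)$, so in an optimal permutation $\sigma$ for $(\QP Q,\QP R)$ every edge satisfies $d(Q_i,R_{\sigma(i)})\leq\mdist{\QP Q}{\QP R}<\dist(V_1,V_2)$ and therefore cannot join a point of $V_1$ to a point of $V_2$. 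Hence $\sigma$ respects the clusters, and restricting it to the two clusters exhibits admissible matchings proving $\mdist{\QP Q}{\QP R}^2\geq\mdist{\pi_1^{\QP P}\QP Q}{\pi_1^{\QP P}\QP R}^2+\mdist{\pi_2^{\QP P}\QP Q}{\pi_2^{\QP P}\QP R}^2$, which is the second asserted identity.

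Finally, continuity comes for free: the identity just proved yields $\mdist{\pi_\ell^{\QP P}\QP Q}{\pi_\ell^{\QP P}\QP R}\leq\mdist{\QP Q}{\QP R}$ for $\ell=1,2$, so both $\pi_1^{\QP P}$ and $\pi_2^{\QP P}$ are $1$-Lipschitz on $U$. The step I expect to be delicate is the $\geq$ direction of the identity: one must ensure that bounding every optimal edge below $\dist(V_1,V_2)$ genuinely rules out cross-cluster matchings for all pairs $\QP Q,\QP R\in U$ at once, which is precisely why $U$ is taken with the small radius $\beta/8$ rather than merely requiring the supports to lie in $V_1\cup V_2$; note also that lumping the several far-apart clusters $B_\rho(y_2),\dotsc,B_\rho(y_J)$ into a single $V_2$ of large diameter causes no trouble, since the no-crossing argument never compares distances within $V_2$.
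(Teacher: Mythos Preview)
Your proof is correct and follows essentially the same cluster-separation strategy as the paper: isolate one repeated value of $\QP P$ (of multiplicity $R$), define a small neighborhood $U$, and show that optimal matchings between nearby $Q$-points cannot cross the two clusters. The paper takes $U$ via an $\varepsilon/3$-permutation condition and establishes the no-crossing property by an improving-swap argument spelled out only for pairs $(\QP P,\QP A)$; your smaller metric ball $U=\{\QP Q:\mdist{\QP Q}{\QP P}<\beta/8\}$ lets the uniform edge bound $d(Q_i,R_{\sigma(i)})\leq\mdist{\QP Q}{\QP R}<\dist(V_1,V_2)$ handle all pairs $\QP Q,\QP R\in U$ at once, which is if anything tidier.
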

\begin{proof}
Clearly there exist $1\leq R < Q$, $P$, $P_{R+1}, P_{R+2}, \dotsc, P_Q\in M$, and $\varepsilon>0$ such that $\QP P=R\xxpar P+\sum^Q_{i=R+1}\xxpar{P_i}$ and $d\xpar{P, P_j}\geq \varepsilon$ for every $R+1\leq j\leq Q$. Therefore, the subset $U\subseteq\mval QM$ defined so that $\QP A\in U$ if and only if there exists $\sigma\in S_Q$ such that $d\xpar{P, A_{\sigma(i)}}<\varepsilon/3$ for every $i=1, 2, \dotsc, R$ and $d\xpar{P_i, A_{\sigma(i)}}<\varepsilon/3$ for every $i=R+1, R+2, \dotsc, Q$ is open and contains $\QP P$. 

If $\QP A\in U$ we can assume without loss of generality that $\sigma(i)=i$, then we always have 
\begin{gather*} 
A_1, A_2, \dotsc, A_R\in B_{\varepsilon/3}(P)\\
A_{R+1}, A_{R+2}, \dotsc, A_Q\notin B_{2\varepsilon/3}(P)\\
\mdist{\QP P}{\QP A}^2=\sum^R_{i=1}d\xpar{P, A_i}^2+\sum^Q_{i=R+1}d\xpar{P_i, A_{\eta(i)}}^2
\end{gather*}
where $\eta\in S_Q$ with $\eta(i)=i$ for every $1\leq i\leq R$. We will prove only the last assertion. First of all, assume that 
\[
\mdist{\QP P}{\QP A}^2=\sum^R_{i=1}d\xpar{P, A_{\eta(i)}}^2+\sum^Q_{i=R+1}d\xpar{P_i, A_{\eta(i)}}^2
\]
and set
\begin{gather*}
E=\iset{i=1, 2, \dotsc, R}{\eta(i)\geq R+1}\\
F=\iset{i=R+1, R+2, \dotsc, Q}{\eta(i)\leq R}\text .
\end{gather*}
Set $P_i=P$ for $i=1, \dotsc, R$ we clearly have $d\xpar{P_i, A_{\eta(i)}}\geq 2\varepsilon/3$ if $i\in E\cup F$. Moreover, both $E$ and $F$ must have the same cardinality thus if $E$ is not empty then we can find another $\eta'\in S_Q$ such that $\eta'(i)=\eta(i)$ if $i\notin E\cup F$ and $d\xpar{P_i, A_{\eta'(i)}}<\varepsilon/3$ for every $i$. But this is impossible by our assumption on $\eta$.
   
Therefore we can set
    \begin{gather*}
        \fun{\pi_1^{\QP P}}{\QP A\in U}{\sum^R_{i=1}\xxpar{A_i}\in\mval RM}\\
        \fun{\pi_2^{\QP P}}{\QP A\in U}{\sum^Q_{i=R+1}\xxpar{A_i}\in\mval {Q-R}M}\text .\qedhere
    \end{gather*}
\end{proof}

\begin{corollary}
\label{sep}Let $\Omega\subseteq X$ be open nonempty and $\fun f\Omega{\mval QM}$ be continuous at $x\in\Omega\setminus\Lambda$ where
\begin{gather*}
\Lambda=\iset{x\in\Omega}{f(x)=Q\xxpar P\text{ for some }P\in M}\text .   
\end{gather*}
Then there exist an open neighborhood $U$ of $x$ and $\fun gU{\mval RM}$, $\fun hU{\mval SM}$ with $Q=R+S$, $\rstr fU=\xxpar g + \xxpar h$ and 
\begin{gather*}
\abgr g(x), \abgr h(x)\leq\abgr f(x)\text .
\end{gather*}
\end{corollary}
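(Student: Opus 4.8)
The plan is to apply the previous theorem with $\QP P=f(x)$ and then to postcompose its two projections with $f$. Since $x\notin\Lambda$ we have $f(x)\in\mval QM\setminus\iset{Q\xxpar P}{P\in M}$, so the previous theorem furnishes an integer $1\le R<Q$, an open neighborhood $V$ of $f(x)$ in $\mval QM$, and two continuous maps $\fun{\pi_1}V{\mval RM}$ and $\fun{\pi_2}V{\mval SM}$ with $S=Q-R$ (so that $1\le R, S\le Q-1$ and $R+S=Q$, as required) satisfying the decomposition $\QP Q=\pi_1(\QP Q)+\pi_2(\QP Q)$ together with the Pythagorean identity
\begin{gather*}
\mdist{\QP Q}{\QP R}^2=\mdist{\pi_1(\QP Q)}{\pi_1(\QP R)}^2+\mdist{\pi_2(\QP Q)}{\pi_2(\QP R)}^2
\end{gather*}
for every $\QP Q, \QP R\in V$.

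Next I would invoke the continuity of $f$ at $x$. Because $V$ is an open neighborhood of $f(x)$, continuity provides an open neighborhood $U\subseteq\Omega$ of $x$ with $f(U)\subseteq V$. On this $U$ I set $g=\pi_1\circ\rstr fU$ and $h=\pi_2\circ\rstr fU$, which are continuous as compositions of continuous maps and take values in $\mval RM$ and $\mval SM$ respectively. Applying the first property of the previous theorem pointwise gives $f(y)=g(y)+h(y)$ for every $y\in U$, that is $\rstr fU=\xxpar g+\xxpar h$, which is the first assertion.

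Finally I would read off the two bounds directly from the Pythagorean identity. For every $y\in U\setminus\set x$ both $f(y)$ and $f(x)$ lie in $V$, so
\begin{gather*}
\mdist{g(y)}{g(x)}^2+\mdist{h(y)}{h(x)}^2=\mdist{f(y)}{f(x)}^2\text ,
\end{gather*}
whence $\mdist{g(y)}{g(x)}\le\mdist{f(y)}{f(x)}$ and $\mdist{h(y)}{h(x)}\le\mdist{f(y)}{f(x)}$. Dividing by $\poym(y, x)$ and passing to the $\limsup$ as $y\to x$ yields $\abgr g(x)\le\abgr f(x)$ and $\abgr h(x)\le\abgr f(x)$.

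I expect no substantial obstacle here: essentially all the content has already been packaged into the previous theorem, so the corollary is mostly bookkeeping. The one genuine input is the continuity hypothesis, which is exactly what guarantees that $f$ carries an entire neighborhood of $x$ into the domain $V$ on which the projections $\pi_1,\pi_2$ are defined (these are only local). The only mild point to verify is that computing $\abgr g(x)$ and $\abgr h(x)$ with $g,h$ defined merely on $U$ gives the same value as the ambient definition, which is immediate because $U$ is a $\poym$-neighborhood of $x$ and $\abgr{\cdot}$ depends only on the behaviour of the function arbitrarily close to $x$.
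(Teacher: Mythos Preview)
Your proof is correct and is exactly the argument the paper has in mind: the corollary is stated without proof because it follows immediately from the preceding theorem by pulling back the neighborhood $V$ of $f(x)$ via continuity and composing with the two projections, with the gradient bounds read off from the Pythagorean identity. There is nothing to add.
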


\begin{theorem}\label{steqv}
If an approximate $X$-distribution $\symcal D$ for $M$ is complete then the $X$-distribution $\symcal D^Q$ for $\mval QM$ satisfying both \eqref{difdec} and \eqref{eqty} is complete too. 
\end{theorem}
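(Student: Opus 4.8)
The plan is to verify Rademacher's condition (RC) for $\symcal D^Q$ directly, by induction on $Q$. The base case $Q=1$ is exactly the hypothesis that $\symcal D=\symcal D^1$ is complete. For the inductive step I assume the theorem for every $R<Q$, i.e.\ that $\symcal D^R$ is complete for $1\le R<Q$; by the forward implication of the characterisation of differentiable completeness this gives that \emph{every} function $\fun h{A}{\mval RM}$ on an arbitrary nonempty $A\subseteq X$ is a.e.\ approximately $\symcal D^R$-differentiable on $A_h^\mu$. I then fix a closed nonempty $C\subseteq X$ and a Lipschitz $\fun gC{\mval QM}$ and show $g$ is a.e.\ approximately $\symcal D^Q$-differentiable. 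Since the collapsed set $\Lambda=\iset{x\in C}{g(x)=Q\xxpar P\text{ for some }P\in M}$ is closed (the diagonal is closed in $\mval QM$ and $g$ is continuous), it suffices to treat a.e.\ point of $C\setminus\Lambda$ and a.e.\ point of $\Lambda$ separately.

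On $C\setminus\Lambda$ I use the separation theorem. For $x\in C\setminus\Lambda$ the value $g(x)$ lies in $\mval QM\setminus\iset{Q\xxpar P}{P\in M}$, so there are $1\le R<Q$, an open $U\ni g(x)$ and continuous splitting maps $\fun{\pi_1}U{\mval RM}$, $\fun{\pi_2}U{\mval{Q-R}M}$ realising $\QP A=\pi_1(\QP A)+\pi_2(\QP A)$ together with the Pythagorean identity for $\mdist\cdot\cdot$ (Corollary \ref{sep}). That identity makes $\pi_1,\pi_2$ $1$-Lipschitz, so on the relatively open set $W=g^{-1}(U)$ we obtain $\rstr gW=\xxpar{g_1}+\xxpar{g_2}$ with $g_1=\pi_1\circ g$, $g_2=\pi_2\circ g$ Lipschitz and $\abgrm{g_1},\abgrm{g_2}\le\abgrm g$. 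Covering $C\setminus\Lambda$ by countably many such $W$ (separability), the inductive hypothesis makes $g_1,g_2$ approximately $\symcal D^R$- resp.\ $\symcal D^{Q-R}$-differentiable at a.e.\ point, with germs $[\symbf F_1],[\symbf F_2]$. I then set $[\symbf F]=\xxpar{\symbf F_1}+\xxpar{\symbf F_2}$ and check $[\symbf F]\in\symcal D^Q_{x,g(x)}$: condition \eqref{difdec} is immediate, and \eqref{eqty} holds because inside $U$ the supports of the $\pi_1$- and $\pi_2$-parts are separated by a fixed positive distance, so coincidences among base points occur only within a single block and are inherited from $[\symbf F_1],[\symbf F_2]$. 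Finally the orthogonality of the metric ($\mdist{g(y)}{\symbf F(y)}^2=\mdist{g_1(y)}{\symbf F_1(y)}^2+\mdist{g_2(y)}{\symbf F_2(y)}^2$ near $x$) upgrades approximate differentiability of $g_1,g_2$ to that of $g$, using that $W$ is relatively open so the local behaviour of $g$ on $C$ near $x$ is that on $W$.

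On $\Lambda$ I write $g=Q\xxpar P$ with $\fun P\Lambda M$ Lipschitz; completeness of $\symcal D$ makes $P$ approximately $\symcal D$-differentiable at a.e.\ $x\in\Lambda$, with germ $[F]$, and I take $[\symbf F]=Q\xxpar F\in\symcal D^Q_{x,g(x)}$ (both \eqref{difdec} and \eqref{eqty} trivial). The difficulty here — and the place where the $\mu$-topology is indispensable — is that the approximate differential of $g$ at $x\in\Lambda$ must be tested against \emph{all} $y\in C$, including non-collapsed $y\in C\setminus\Lambda$, for which $\mdist{g(y)}{\symbf F(y)}$ is only $O(\poym(y,x))$ rather than $o(\poym(y,x))$. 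I resolve this by restricting to $\mu$-density points of $\Lambda$: for a.e.\ $x\in\Lambda$ the Borel set $C\setminus\Lambda$ has $\mu$-density $0$ at $x$, so $U_0=X\setminus(C\setminus\Lambda)$ is a $\mu$-neighborhood of $x$ with $U_0\cap C=\Lambda$. Restricting the $\mu$-neighborhoods in the definition of $\mu-\limsup$ to subsets of $U_0$ yields
\begin{gather*}
\mu-\limsup_{y\in C\to x}\frac{\mdist{g(y)}{\symbf F(y)}}{\poym(y,x)}\le \mu-\limsup_{y\in\Lambda\to x}\frac{\sqrt Q\,d\xcpar{P(y), F(y)}}{\poym(y,x)}=0,
\end{gather*}
so $g$ is approximately $\symcal D^Q$-differentiable at $x$.

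Combining the two parts, $g$ is a.e.\ approximately $\symcal D^Q$-differentiable on $C$, which is (RC) for $\symcal D^Q$ and closes the induction. I expect the main obstacle to be precisely the collapsed-part argument: away from $\mu$-density points of $\Lambda$ the classical limit genuinely fails, and it is only the passage to the $\mu$-topology (together with the fact that \eqref{eqty} forces the single germ $Q\xxpar F$) that rescues differentiability. A secondary technical point to handle with care is the bookkeeping that the exceptional null sets from the countable cover of $C\setminus\Lambda$, from $\densun\Lambda$, and from $A_P^\mu$ all assemble into a single $\mu$-null subset of $C$.
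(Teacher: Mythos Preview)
Your proposal is correct and follows essentially the same route as the paper: induction on $Q$, the local splitting via Corollary~\ref{sep} on $C\setminus\Lambda$ combined with a countable cover by separability, and on the collapsed set $\Lambda$ the single-valued selection $P$ together with restriction to $\mu$-density points of $\Lambda$ so that the identity $\mdist{g(y)}{Q\xxpar{F(y)}}=\sqrt Q\,d\xcpar{P(y),F(y)}$ suffices. Your write-up is in fact more explicit than the paper's on two points the paper leaves implicit---the verification of \eqref{eqty} across the two blocks (separation of supports) and the reason the inductive step on the relatively open pieces is legitimate (you invoke the Stepanov-type characterisation rather than (RC) directly)---but these are elaborations, not a different strategy.
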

\begin{proof}
We use induction on $Q$. If $Q=1$ then we can use assumption (RC) on $M$, thus we can assume that theorem \ref{steqv} holds for any $1\leq Q' < Q$ in order to prove it for $Q$. Let $\fun fC{\mval QM}$ be a Lipschitz function defined on a closed subset $C$ of $X$ and let $x\in C$. 

If $x\notin\Lambda$ then by corollary \ref{sep} we have $\rstr fV=\xxpar g+\xxpar h$ with $x\in A_g\cap A_h=V$ for some open neighborhood $V$ of $x$, thus we can use induction on $g$ and $h$ to prove that $f$ is a.e.\ differentiable on $V$. Since $C$ is separable we can cover $C\setminus\Lambda$ by countably many of such $V$, thus we immediately get that $f$ is a.e.\ differentiable on $C\setminus\Lambda$.

On $\Lambda$ we have instead $f(x)=Q\xxpar{f_a(x)}$ with $\fun{f_a}{\Lambda}{M}$ Lipschitz, therefore by induction hypothesis $f_a$ is a.e.\ approximately $\symcal D$-dif\-fe\-ren\-ti\-a\-ble on $A^\mu_{f_a}$. If $L\subseteq A^\mu_{f_a}$ is the set of points on which $f_a$ is approximately $\symcal D$-differentiable with differential $[F]\in \symcal D_{x, f_a(x)}$ then we can assume that $L\subseteq\densun\Lambda\cap A_f^\mu$. In this way $F$ is approximately $\symcal D^Q$-differentiable on every point of $L$ because
\begin{gather*}
    \mdist{f(y)}{Q\xxpar{F(y)}}=\sqrt Q\ d\xcpar{f_a(y), F(y)}
\end{gather*}
for every $y\in \Lambda$.
\end{proof}

With this result we are finally able to prove theorem \ref{maingen}.

\begin{proof}[Proof of theorem \ref{maingen}]
Let $\symcal D$ be a complete $X$-distribution for $(M, d)$, then by theorem \ref{steqv} the $X$-distribution $\symcal D^Q$ for $\xpar{\mval{Q}{M}, \mathcal{G}}$ is complete for every $Q\geq 1$. Then by theorem \ref{difcom} we immediately get that every function $\fun{f}{X}{\mval{Q}{M}}$ is a.e.\ $\symcal D^Q$-differentiable on $A_f$.
\end{proof}



\printbibliography
\end{document}